\numberwithin{equation}{section}
\theoremstyle{plain}
\newtheorem{theorem}{Theorem}
\newtheorem*{theorem*}{Theorem}
\newtheorem{lemma}[theorem]{Lemma}
\newtheorem*{lemma*}{Lemma}
\newtheorem{corollary}[theorem]{Corollary}
\newtheorem*{corollary*}{Corollary}
\newtheorem*{proposition*}{Proposition}
\theoremstyle{definition}
\newtheorem{definition}[theorem]{Definition}
\newtheorem*{definition*}{Definition}
\newtheorem*{conjecture*}{Conjecture}
\newtheorem*{convention*}{Convention}
\newtheorem{remark}[theorem]{Remark}
\newtheorem*{remark*}{Remark}
\newtheorem*{problem*}{Problem}
\newtheorem{example}[theorem]{Example}
\newtheorem*{example*}{Example}
\newtheorem*{exercise*}{Exercise}
\newtheorem*{fact*}{Fact}
\newtheorem*{notation*}{Notation}
\newtheorem*{observation*}{Notation}
\newcommand{\C}{\mathbb{C}}
\newcommand{\CCC}{\mathcal{C}}
\newcommand{\DDD}{\mathcal{D}}
\newcommand{\FFF}{\mathcal{F}}
\newcommand{\GGG}{\mathcal{G}}
\newcommand{\HHH}{\mathcal{H}}
\newcommand{\LLL}{\mathcal{L}}
\newcommand{\NNN}{\mathcal{N}}
\newcommand{\WWW}{\mathcal{W}}
\newcommand{\Z}{\mathbb{Z}}
\newcommand{\N}{\mathbb{N}}
\newcommand{\colim}{\text{colim}}
\newcommand{\knotlinewidth}{1pt}
\tikzset{overcross/.style={double, line width=1.5, white, double=black, double distance=\knotlinewidth}}
\newcommand{\Hom}{{\rm{Hom}}}
\definecolor{mygreen}{RGB}{0,128,0}
\definecolor{bubbles}{rgb}{0.91, 1.0, 1.0}
\renewcommand{\thesection}{\@arabic\c@section}
\newcommand*{\relrelbarsep}{.386ex}
\newcommand*{\relrelbar}{%
\mathrel{%
	\mathpalette\@relrelbar\relrelbarsep
}%
}
\newcommand*{\@relrelbar}[2]{%
\raise#2\hbox to 0pt{$\m@th#1\relbar$\hss}%
\lower#2\hbox{$\m@th#1\relbar$}%
}
\providecommand*{\rightrightarrowsfill@}{%
\arrowfill@\relrelbar\relrelbar\rightrightarrows
}
\providecommand*{\leftleftarrowsfill@}{%
\arrowfill@\leftleftarrows\relrelbar\relrelbar
}
\providecommand*{\xrightrightarrows}[2][]{%
\ext@arrow 0359\rightrightarrowsfill@{#1}{#2}%
}
\providecommand*{\xleftleftarrows}[2][]{%
\ext@arrow 3095\leftleftarrowsfill@{#1}{#2}%
}
\begin{document}

\begin{abstract}
	\noindent We prove an essentially surjective Galois-correspondence-like functor for $n$-stacks. More specifically, it gives an essentially surjective functor from the $\infty$-category of $n$-stacks of finite sets with an action of the fundamental group of $X$ to the $\infty$-category of Deligne-Mumford $n$-stacks \'etale over a connected scheme $X$.
\end{abstract}

\title{The ``Galois Correspondence'' for $n$-Stacks}
\author{Yuxiang Yao}
\email{yuxiany@uci.edu} 
\address{Department of Mathematics\\University of California-Irvine}
\keywords{Galois correspondences, $n$-groupoids, Deligne-Mumford $n$-stacks, simplicial localizations.}

\maketitle



\section{Introduction}\label{Section of Introduction}
In this note, we will introduce an essentially surjective Galois-correspondence-like functor for $n$-stacks. Our motivation is to have an explicit write-up for such a functor for Deligne-Mumford stacks that are finite \'etale over a nice scheme $X$. But the literature actually deduces a more general version for Deligne-Mumford $n$-stacks rather than only for usual stacks. So we instead build the note for a more general situation. Let's first state our main theorem for groupoids below:

\begin{theorem}[Galois Correspondence for $n$-groupoids]\label{main theorem}
	Let $\eta$ be a geometric point in a connected scheme $X$. There is an exact equivalence of categories of fibrant objects
	\[F_\eta: n\text{-}\mathrm{Grpds}(\mathrm{FEt}_X)\xrightarrow{\sim} \pi_1^{\mathrm{\acute{e}t}}(X, \eta)\text{-}n\text{-}\mathrm{Grpds}(\mathrm{FinSets}).\]
	Here $n\text{-}\mathrm{Grpds}(\mathrm{FEt}_X)$ is the $1$-category of $n$-groupoids in $\mathrm{FEt}_X$, the category of finite \'etale (surjective) covers over $X$; and $\pi_1^{\mathrm{\acute{e}t}}(X, \eta)\text{-}n\text{-}\mathrm{Grpds}(\mathrm{FinSets})$ is the category of $n$-groupoids in finite sets with the \'etale fundamental group, $\pi_1^{\mathrm{\acute{e}t}}(X, \eta)$, acting on them. 
\end{theorem}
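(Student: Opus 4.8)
The plan is to deduce the statement from Grothendieck's Galois correspondence for finite \'etale covers by propagating it through the (2\nobreakdash-)functorial construction of internal $n$-groupoids, and then to check that this propagation respects the categories-of-fibrant-objects structure. First I would record the classical input: the fibre functor restricts to an equivalence of categories $\mathrm{FEt}_X \xrightarrow{\sim} \pi_1^{\mathrm{\acute{e}t}}(X,\eta)\text{-}\mathrm{FinSets}$ onto finite continuous $\pi_1^{\mathrm{\acute{e}t}}(X,\eta)$-sets (Grothendieck's Galois theory of schemes, SGA 1, Exp.\ V). Next I would verify that $\mathrm{FEt}_X$ carries the finite limits used to form the matching objects $M_k(Y)$ and horn objects $\Lambda^m_k(Y)$ of a simplicial object: the terminal object is $\mathrm{id}_X$, and fibre products of surjective finite \'etale covers over the connected base $X$ are again surjective finite \'etale, since each geometric fibre is a product of nonempty finite sets. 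The analogous limits exist in $\pi_1\text{-}\mathrm{FinSets}$ and are created by the forgetful functor to $\mathrm{FinSets}$. Since an equivalence of categories preserves and reflects every limit and colimit that exists, together with the classes of epimorphisms and monomorphisms, $F_\eta$ is automatically compatible with all of this structure.

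The second step is to push $F_\eta$ through the $n$-groupoid construction. An object of $n\text{-}\mathrm{Grpds}(\mathcal{C})$ is a simplicial object $Y_\bullet\colon\Delta^{\mathrm{op}}\to\mathcal{C}$ whose horn maps $Y_m\to\Lambda^m_k(Y)$ are isomorphisms for $m>n$ and (split) epimorphisms for $1\le m\le n$; applying $F_\eta$ levelwise yields a simplicial object in $\pi_1\text{-}\mathrm{FinSets}$, and since $F_\eta$ commutes with the relevant finite limits and preserves (split) epimorphisms, the horn conditions persist, so $(F_\eta)_\ast$ lands in $n\text{-}\mathrm{Grpds}(\pi_1\text{-}\mathrm{FinSets})$. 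Running the same argument with a quasi-inverse of $F_\eta$ shows $(F_\eta)_\ast$ is essentially surjective and fully faithful, hence an equivalence of $1$-categories. To finish the identification of the target I would observe that a simplicial object in $G$-sets is precisely a $G$-object in simplicial sets, and that, since the forgetful functor $G\text{-}\mathrm{FinSets}\to\mathrm{FinSets}$ creates finite limits and reflects epimorphisms, the internal $n$-groupoid condition on a simplicial $G$-set coincides with the ordinary $n$-groupoid condition on its underlying simplicial finite set (continuity of the actions being automatic by finiteness); thus $n\text{-}\mathrm{Grpds}(\pi_1\text{-}\mathrm{FinSets})\simeq\pi_1^{\mathrm{\acute{e}t}}(X,\eta)\text{-}n\text{-}\mathrm{Grpds}(\mathrm{FinSets})$.

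Finally I would address exactness. On both sides the fibrations are the internal Kan fibrations --- maps whose relative horn maps are epimorphisms in the appropriate range --- and the weak equivalences are the maps inducing isomorphisms on the internal homotopy invariants (equivalently, on $\pi_0$ and all $\pi_k$ of the underlying, resp.\ fibre, simplicial finite sets at every basepoint); the terminal object and pullbacks along fibrations are likewise described purely by finite-limit and epimorphism data that $F_\eta$ preserves and reflects. Consequently $F_\eta$, and also its quasi-inverse, carries terminal objects to terminal objects, fibrations to fibrations, trivial fibrations to trivial fibrations, and pullbacks of fibrations to pullbacks of fibrations, so it is an exact equivalence of categories of fibrant objects.

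I expect the main obstacle to lie in the bookkeeping of the first step rather than in the later ones: one must confirm that the surjectivity requirement built into $\mathrm{FEt}_X$ does not spoil the categorical structure needed to define internal $n$-groupoids and their categories of fibrant objects (fibre products behave well over a connected base, but, e.g., general equalizers of finite \'etale covers need not be surjective), and, correspondingly, that the fibration and weak-equivalence classes on $n\text{-}\mathrm{Grpds}(\mathrm{FEt}_X)$ are exactly those transported along $F_\eta$ rather than an a priori different intrinsic choice. Once that is pinned down, the whole statement reduces to Grothendieck's theorem together with the functoriality of the $n$-groupoid construction.
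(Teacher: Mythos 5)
Your proposal follows essentially the same route as the paper: invoke Grothendieck's Galois correspondence for $\mathrm{FEt}_X$, propagate the cover-preserving equivalence levelwise through the $n$-groupoid construction (the paper's Lemma \ref{Equivalence of Cats implies Equivalence of n Grpds}, left there as an exercise, which you prove inline), and then commute the $\pi_1^{\mathrm{\acute{e}t}}(X,\eta)$-action past $n\text{-}\mathrm{Grpds}(-)$ (the paper's Lemma \ref{Pull Out Simplicial group action out of n-Gprds(-)}), checking exactness at the end. The one point to align with the paper's conventions is that the horn and matching maps are required to be \emph{covers} in the distinguished subcategory (surjective finite \'etale morphisms, resp.\ surjective equivariant maps) rather than merely (split) epimorphisms, but since the fiber functor matches these two classes of covers, your argument goes through unchanged.
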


This will be proved in Section \ref{Section for main thm}. Then a weaker version of the above result for $n$-stacks can be considered as a corollary of the theorem above by simplicial localization. This will be concluded in Section \ref{Section for Corollary} after the proof of the main theorem and a bit more prerequisites. Note that $n$-groupoids can be considered as a nice device for presenting stacks, see \cite{P2013}. Also, note that we cannot say the following functor is an equivalence of simplicial categories. The subtleties will be explained in the Proof of Corollary \ref{Corollary_Main Corollary Proof} below as well as in the Example and Remarks before it in Section \ref{Section for Corollary}.

\begin{corollary}\label{corollary of main thm}
	Let $\eta$ be a geometric point in a connected scheme $X$. There exists an essentially surjective functor
	\[G^\LLL_\eta: \pi^{\mathrm{\acute{e}t}}_1(X, \eta)\text{-}n\text{-}\mathrm{Stacks(FinSets)}\rightarrow\mathrm{DM}\text{-}n\text{-}\mathrm{Stacks}/\mathrm{\acute{e}tale}\text{ }\mathrm{over}\text{ }X 
	\]
	Here, $\mathrm{DM}\text{-}n\text{-}\mathrm{Stacks}/\mathrm{\acute{e}tale}\text{ }\mathrm{over}\text{ }X$ is given by the simplicial localization of \'etale $n$-groupoids in $\mathrm{Et}(X)$; and $\pi^{\mathrm{\acute{e}t}}_1(X, \eta)\text{-}n\text{-}\mathrm{Stacks(FinSets)}$ is obtained by the simplicial localization of $\pi^{\mathrm{\acute{e}t}}_1(X, \eta)\text{-}n\text{-}\mathrm{Grpds}(\mathrm{FinSets})$.
\end{corollary}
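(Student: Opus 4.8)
The plan is to build $G^\LLL_\eta$ by simplicially localizing the equivalence of Theorem~\ref{main theorem} and then composing with the localization of the evident forgetful inclusion. First I would observe that hammock (Dwyer--Kan) localization, as a functor on categories with weak equivalences, carries an equivalence of such categories to a Dwyer--Kan equivalence of simplicial categories, since an equivalence of $1$-categories automatically preserves and reflects weak equivalences and has unit and counit that are (in particular) natural weak equivalences. Applying this to the exact equivalence of categories of fibrant objects $F_\eta$ produces an equivalence
\[
LF_\eta\colon L\bigl(n\text{-}\mathrm{Grpds}(\mathrm{FEt}_X)\bigr)\xrightarrow{\ \sim\ }L\bigl(\pi_1^{\mathrm{\acute{e}t}}(X,\eta)\text{-}n\text{-}\mathrm{Grpds}(\mathrm{FinSets})\bigr)=\pi^{\mathrm{\acute{e}t}}_1(X,\eta)\text{-}n\text{-}\mathrm{Stacks(FinSets)},
\]
the last identification being the definition recalled in the Corollary. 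Next, since every finite \'etale cover of $X$ is in particular \'etale, there is a fully faithful inclusion of $1$-categories $\iota\colon n\text{-}\mathrm{Grpds}(\mathrm{FEt}_X)\hookrightarrow\{\text{\'etale }n\text{-groupoids in }\mathrm{Et}(X)\}$ which is exact for the two structures of categories of fibrant objects (terminal object, fibrations, weak equivalences and pullbacks along fibrations are all detected degreewise). Localizing yields $L\iota$, and I would simply set
\[
G^\LLL_\eta := L\iota\circ (LF_\eta)^{-1}\colon \pi^{\mathrm{\acute{e}t}}_1(X,\eta)\text{-}n\text{-}\mathrm{Stacks(FinSets)}\longrightarrow \mathrm{DM}\text{-}n\text{-}\mathrm{Stacks}/\mathrm{\acute{e}tale}\text{ }\mathrm{over}\text{ }X.
\]

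Because $(LF_\eta)^{-1}$ is an equivalence, $G^\LLL_\eta$ is essentially surjective precisely when $L\iota$ is, so the real content is to show: every \'etale $n$-groupoid $Z_\bullet$ in $\mathrm{Et}(X)$ is, after localization, equivalent to the image of an $n$-groupoid that is degreewise \emph{finite} \'etale over $X$. I would prove this by choosing an \'etale atlas and using descent together with the local structure of \'etale morphisms: the realization $[Z_\bullet]$ is a Deligne--Mumford $n$-stack \'etale over $X$, and \'etale-locally on the connected scheme $X$ it is a constant $n$-stack attached to an $n$-groupoid in \emph{finite} sets (this is where connectedness of $X$ and $n$-truncation enter). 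Hence one can choose an \'etale surjection $U\to[Z_\bullet]$ with $U$ finite \'etale over $X$; then $[Z_\bullet]$ is also presented by the $n$-truncated \v{C}ech nerve of $U\to[Z_\bullet]$, whose term in degree $p$ is $U\times_{[Z_\bullet]}\cdots\times_{[Z_\bullet]}U$, a monomorphism into $U\times_X\cdots\times_X U$; under the standing separatedness/quasi-compactness hypotheses on the Deligne--Mumford $n$-stacks in play this monomorphism is finite, so every term is finite \'etale over $X$. This exhibits $[Z_\bullet]$ in the essential image of $L\iota$ and finishes the essential surjectivity.

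The main obstacle is exactly this geometric reduction, and it is also what pins down why one can claim no more than essential surjectivity. Although $\iota$ is fully faithful as a functor of $1$-categories, the class of weak equivalences on \'etale $n$-groupoids in $\mathrm{Et}(X)$ restricts on the subcategory $n\text{-}\mathrm{Grpds}(\mathrm{FEt}_X)$ to a strictly larger class than its own weak equivalences: two finite-\'etale $n$-groupoids may be joined by a zig-zag of weak equivalences that necessarily passes through $n$-groupoids with infinite (merely \'etale) terms, and such a zig-zag cannot in general be pushed into the subcategory. Consequently $L\iota$ need not be homotopically full or faithful, the Dwyer--Kan equivalence $LF_\eta$ is in any case only an equivalence of $\infty$-categories rather than literally of simplicial categories, and the composite $G^\LLL_\eta$ can therefore only be asserted to be essentially surjective; one must argue this surjectivity directly by the atlas-and-descent argument above rather than by formally inverting $L\iota$. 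I would also make explicit, in the Remarks preceding the statement as promised in the introduction, precisely which finiteness/separatedness hypotheses on ``Deligne--Mumford $n$-stack \'etale over $X$'' guarantee the finite-\'etale atlas $U$, since the finiteness of the \v{C}ech terms rests on them, and I would include the advertised Example showing the failure of fullness/faithfulness concretely.
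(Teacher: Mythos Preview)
Your construction of $G^\LLL_\eta$ as $L\iota \circ (LF_\eta)^{-1}$ is exactly the paper's approach: the proof localizes the equivalence $F_\eta$ to obtain an equivalence $\GGG_1$ between $\HHH_{\mathrm{Surj}}^{-1}n\text{-}\mathrm{Grpds}(\pi_1^{\mathrm{\acute{e}t}}(X,\eta)\text{-}\mathrm{FinSets})$ and $\HHH_{\mathrm{FEt}}^{-1}n\text{-}\mathrm{Grpds}(\mathrm{FEt}_X)$, and then composes with the canonical functor induced by the inclusion $\mathrm{FEt}_X \subset \mathrm{Et}_X$. Your diagnosis of why $L\iota$ need not be homotopically fully faithful (more hypercovers become available once one passes to $\mathrm{Et}_X$) also matches the paper's Remark on this point.

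The gap is in your essential-surjectivity argument. You claim that every \'etale $n$-groupoid $Z_\bullet$ in $\mathrm{Et}_X$ is, after localization, equivalent to one with degreewise \emph{finite} \'etale terms, arguing that ``\'etale-locally on $X$ it is a constant $n$-stack attached to an $n$-groupoid in finite sets''. This is false: objects of $\mathrm{Et}_X$ are merely \'etale surjective over $X$, and \'etale morphisms are not in general \'etale-locally constant with finite fibres. Concretely, take $n=0$ and $X = \mathbb{A}^1_k$; then $Y = (\mathbb{A}^1_k\setminus\{0\}) \sqcup \mathbb{A}^1_k \to X$ is \'etale surjective, separated and quasi-compact, yet not weakly equivalent in $n\text{-}\mathrm{Grpds}(\mathrm{Et}_X)$ to anything in the image of $\iota$, and it admits no finite \'etale atlas over $X$. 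So $L\iota$ is \emph{not} essentially surjective onto all of $n\text{-}\mathrm{Stacks}(\mathrm{Et}_X)$, and the separatedness/quasi-compactness hypotheses you propose to add cannot repair this.

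The paper does not attempt any such geometric reduction. Its essential surjectivity is purely formal: the composite is factored through the intermediate category ``DM-$n$-Stacks/\emph{finite} \'etale over $X$'', whose objects are by definition exactly the $n$-groupoids in $\mathrm{FEt}_X$; essential surjectivity onto \emph{that} category is then immediate, since $\GGG_1$ is an equivalence and the comparison functor $\GGG_2$ into it is the identity on objects. In other words, the ``real content'' you identify is neither needed nor true; you should drop the atlas-and-descent paragraph and replace it with this formal observation about the intermediate category.
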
 

Note that the category DM-$n$-Stacks/finite \'etale over $X$ of DM-$n$-stacks finite \'etale over $X$ is NOT the full subcategory of $n$-Stacks(Et$_X$) consists of all DM-$n$-Stacks finite \'etale over $X$, since it has fewer hypercovers.

We may call it \textit{Galois functor for $n$-stack}, since it is not actually a weak equivalence of $\infty$-categories. The story here can be traced back to a well-known result in the context of schemes, suppose that $X$ is a connected scheme with a geometric point $\eta$, one has a schematic version of Galois correspondence:

\begin{theorem}\label{SchemeGaloisCorrespondence}
	\textit{The following functor, called the fiber functor,}
	\[F^{\mathrm{Sch}}_\eta: \mathrm{FEt}(X)\to \pi^{\mathrm{\acute{e}t}}_1(X, \eta)\mathrm{-FinSets},\;\; (Y\xrightarrow{f} X) \mapsto \eta^* Y, \]
	\textit{is an equivalence of categories.}
\end{theorem}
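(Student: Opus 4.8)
The plan is the classical one: recognize $\mathrm{FEt}(X)$ as a \emph{Galois category} in the sense of Grothendieck (SGA1, Exp.~V) with $F^{\mathrm{Sch}}_\eta=\eta^*(-)$ as a fibre functor, and then invoke---reconstructing it where needed---the general equivalence between a Galois category and the category of finite continuous $G$-sets, where $G$ is the automorphism group of the fibre functor. Since $\pi^{\mathrm{\acute{e}t}}_1(X,\eta)$ is by definition $\mathrm{Aut}(F^{\mathrm{Sch}}_\eta)$ with its profinite topology coming from the action on the finite sets $\eta^*Y$, once this recognition is in place the statement becomes a special case of the abstract theorem.

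\textbf{Verifying the axioms.} First I would check that $\mathcal{C}:=\mathrm{FEt}(X)$ has finite limits (fibre products of finite \'etale covers are finite \'etale; the final object is $X$) and finite colimits: finite coproducts are disjoint unions, and the only delicate case, the quotient by a finite group of automorphisms, exists because locally $X=\Spec R$, a finite \'etale cover corresponds to a finite locally free $R$-algebra, and the ring of invariants of a finite group action is again finite \'etale over $R$. Next, every morphism $Y\to Z$ in $\mathcal{C}$ factors as $Y\to Z'\hookrightarrow Z$ with $Y\to Z'$ surjective finite \'etale (a strict epimorphism) and $Z'$ an open-and-closed, hence direct-summand, subscheme of $Z$, because a finite \'etale morphism is open and closed. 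Finally I would check that $F^{\mathrm{Sch}}_\eta$ is exact, conservative, and valued in finite sets; conservativity is where connectedness of $X$ enters, since the degree of a finite \'etale morphism is locally constant and hence constant on the connected scheme $X$, so a finite \'etale map that is bijective on the geometric fibre over $\eta$ has degree $1$ everywhere and is therefore an isomorphism.

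\textbf{Pro-representability and the equivalence.} The technical core is that $F^{\mathrm{Sch}}_\eta$ is pro-representable by a cofiltered system $(P_i)_{i\in I}$ of connected \emph{Galois} covers---$P_i\to X$ connected with $\mathrm{Aut}_X(P_i)$ acting simply transitively on $\eta^*P_i$---so that $\pi^{\mathrm{\acute{e}t}}_1(X,\eta)\cong\varprojlim_i\mathrm{Aut}_X(P_i)$. Concretely, given a connected cover $Y\to X$ of degree $d$, one forms the $d$-fold fibre product $Y\times_X\cdots\times_X Y$, deletes the ``diagonal'' loci where two coordinates coincide, and passes to a connected component: this cover is Galois and dominates $Y$, and such covers form a cofiltered system. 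Granting this, one builds a quasi-inverse $\Psi$: a transitive continuous $G$-set is $G/U$ with $U$ open, hence $U\supseteq\ker(G\to\mathrm{Aut}_X(P_i))$ for some $i$, so $U$ corresponds to a subgroup $H\le\mathrm{Aut}_X(P_i)$ and one sets $\Psi(G/U):=P_i/H$, extending by coproducts; then $F^{\mathrm{Sch}}_\eta\circ\Psi\cong\mathrm{id}$ and $\Psi\circ F^{\mathrm{Sch}}_\eta\cong\mathrm{id}$ yield both essential surjectivity and, after reducing full faithfulness to the case of connected $Y$ via the strict-epi/mono factorization and exactness of $F^{\mathrm{Sch}}_\eta$, the identification of $\mathrm{Hom}_{\mathcal{C}}(Y,Z)$ with $\mathrm{Hom}_G(\eta^*Y,\eta^*Z)$ (both equal to the $H$-fixed points of $\eta^*Z$ when $Y=P_i/H$, since a map out of a connected cover is pinned down by the image of a single point of one fibre, and any $H$-compatible choice is realized by the appropriate connected component of $Y\times_X Z$).

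\textbf{Main obstacle.} I expect the only genuinely non-formal step to be the pro-representability above---equivalently, the existence of Galois closures over a connected base; everything else is either axiom-chasing with standard facts about finite \'etale morphisms (open, closed, finite locally free, locally constant degree) or elementary manipulation of profinite groups and their finite quotients. Accordingly I would either carry out the Galois-closure construction in detail or, since this theorem is not the novel content of the note, cite it from SGA1 Exp.~V (or Lenstra's \emph{Galois theory for schemes}, Szamuely's \emph{Galois Groups and Fundamental Groups}, or the Stacks Project) and spend the available space on the axiom verifications tailored to the present setting.
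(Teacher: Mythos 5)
Your proposal is correct and coincides with the standard Galois-category argument of SGA1 Exp.~V; the paper itself offers no proof of this theorem, instead citing exactly those sources (Exp.~V of SGA1 and Tag 0BND of the Stacks Project), so your sketch reconstructs the proof the paper implicitly relies on. Your closing suggestion to simply cite the result is in fact what the paper does.
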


For more details, see Expos\'e V in \cite{SGA1} or \href{https://stacks.math.columbia.edu/tag/0BND}{Tag 0BND} in \cite{stacks-project}.

This gives a fundamental relation between the finite covers of a given scheme $X$ and the finite sets with actions of the \'etale fundamental group of $(X, \eta)$. For families of connected degree $d$ branched covers of projective line $\mathbb{P}^1_R$ with $n$ branched points over a connected normal scheme $X$, Theorem \ref{SchemeGaloisCorrespondence} plays an important role in constructing a moduli space. Here, we consider (connected normal) schemes $X$ of finite type over $R = \C$ or $\Z_p$ and with fixed monodromy group $G$. This moduli space is called a \textit{Hurwitz scheme}, denoted by $\mathrm{Hur}_{G, n, d}^{\mathbf{c}}$. To avoid ambiguity, let's explicitly define an isomorphism above from a connected cover $p: Y\to \mathbb{P}^1$ to another connected cover $q: Y'\to \mathbb{P}^1$ by homeomorphism $\phi: Y\to Y'$ such that $q\circ \phi = p$. For automorphisms of a connected cover $p: Y\to \mathbb{P}^1$, they are special cases when $p = q$ above and form a group $\mathrm{Aut}(p)$. Note that in the above setting, we are considering mere covers with monodromy group $G$ rather than Galois $G$-covers. In the literature, many authors use automorphisms with extra conditions, which is often called the \textit{inner automorphisms}; our weaker automorphism here are often called \textit{absolute automorphism}. See, for example, \cite{FV91}, for details. 

However, in the construction of $\mathrm{Hur}_{G, n, d}^{\mathbf{c}}$, a moduli {\em scheme} can be constructed only when the centralizer of the monodromy group $G$ in the symmetric group $S_d$ is trivial. To solve the problem, one can consider coarse moduli, or construct the moduli in the ``category'' of Deligne-Mumford stacks. We would like to use the latter strategy, which is our motivation to build Theorem \ref{main theorem} and Corollary \ref{corollary of main thm}. This moduli stack will be called a Hurwitz stack, and only requires the case of $1$-stacks in Corollary \ref{corollary of main thm}. Here we also note that a key reference cite by many papers on Hurwitz stacks is \cite{RW06}. 

We will introduce the necessary preliminaries so that the above Theorem \ref{main theorem} makes sense at the end of this note. For the classical materials above, one may refer to, for example, Theorem 5.4.2 in \cite{S09}. An analogous topological version of such functor in Corollary \ref{corollary of main thm} will be briefly introduced at the end of the notes to close this story.

Finally, we give a brief summary of the literature. On algebraic geometry side, we will instead consider the \'etale groupoid schemes as atlases for, equivalently as presentations of, Deligne-Mumford stacks in this note. This maybe less used than other interpretations of Deligne-Mumford stacks. We will give a brief explanation in Remark \ref{remark equivalence of different interpretations of DM-stacks}, we also refer to Section 4.3 in \cite{LMB18} in order to connect this article to another interpretation of DM-stacks. In this note, in the proof of Corollary \ref{corollary of main thm} from Theorem \ref{main theorem} largely depends on passing to simplicial localization. The primary reference for Hammock localization is the paper by Dwyer and Kan \cite{DK80b}, which is easier to use than another earlier model of simplicial localization in \cite{DK80a} also by Dwyer and Kan. For an accessible introduction to the material in homotopy theory, a good reference is ``The Homotopy Theory of $(\infty, 1)$-Categorie'' by Bergner\cite{Bergner2018}, and the simplicial localization is introduced in section 3.5.

\subsection*{Acknowledgement} I am grateful to my advisor, Jesse Wolfson, for introducing me to the subject, extensive comments on each of my multiple drafts, as well as for his guidance and supports on all possible perspectives. I would like to thank Craig Westerland for helpful suggestions and comments. I would like to thank Aaron Landesman, Ronno Das, Oliver Braunling and Oishee Banerjee for reading one of my earlier drafts and helpful comments. The authors thank Chenchang Zhu for her thoughts on potential generalizations. I would like to thank Julie Bergner for her suggestions and detailed comments in my earlier draft. Finally, I am indebted to anonymous reviewers of an earlier edition of this paper, who pointed out a subtle mistake that I failed to catch by myself.

\section{The Category of $n$-Groupoids in a Category $\CCC$}

For this section, we take Behrend and Getzler \cite{BG17} and Wolfson \cite{W16} as our primary references. Let's first determine the category $\CCC$ we will work on:

\begin{definition}[Kernel pairs and Effective Epimorphisms]$ $
	\begin{enumerate}
		\item The \textbf{kernel pair} of a morphism $f: X\to Y$ in a category with finite limits is the pair of parallel arrows 
		\[X\times_Y X\rightrightarrows X.\]
		\item The map $f: X\to Y$ is an \textbf{effective epimorphism} if $f$ is the coequalizer of this pair
		\[X\times_Y X\rightrightarrows X\xrightarrow{f} Y.\]
	\end{enumerate}
\end{definition}

From now, we will work in the category $\CCC$ with the following axioms:
\begin{definition}[Category with covers  and finite limits]
	A \textbf{category $\CCC$ with covers and finite limits} consists of a category $\CCC$ with a subcategory of covers satisfying the following five axioms:
	\begin{enumerate}[label=$(\mathrm{C\arabic*})$, start=0]
		\item $\CCC$ has finite limits.
		\item The category $\CCC$ has a terminal object $e$, and the map $X\to e$ is a cover for every object $X\in \CCC$. 
		\item Pullbacks of covers along arbitrary maps exist and are covers. 
		\item If $g$ and $f$ are composable, and $f$ and $gf$ are both covers, then $g$ is also a cover. 
		\item Covers are effective epimorphisms. 
	\end{enumerate}
\end{definition}
\begin{remark}
	This is a union of the axioms in ``the category with covers'' in Wolfson \cite{W16} and the ``descent category'' in Behrend and Getzler \cite{BG17}. Then we are able to make use of results in both papers for the above category $\CCC$. Those categories we care about in this note will also fortunately fit into the two categories in both papers.
\end{remark} 

We define that $[k] = \{0<1<\cdots<n\}$ is a nonempty linearly ordered set, and $\Delta$ is a category whose objects are such $[k]$, and morphisms are nondecreasing maps; define $\Delta^k:= \mathrm{Mor}_{\Delta}(-, [k])$. A simplicial object in $\CCC$ is a functor $X_\bullet: \Delta^{\mathrm{op}}\to \CCC$, and a simplicial set $S$ is a simplicial object in $\CCC = \mathrm{Sets}$. Equivalently, a simplicial set $S$ can be written as the colimit of its simplices
\[S\cong \colim_{\Delta^k\to S} \Delta^k.\]

We will also need the following simplicial subsets of $\Delta^k$:
\begin{itemize}
	\item The boundary $\partial \Delta^k\subset\Delta^k$ can be defined by $(\partial \Delta^k)([m]) = \{\alpha\in \mathrm{Mor}_\Delta ([m], [k])\::\: \alpha\text{ is not surjective}\}$.
	\item The $i$-th horn $\Lambda_i^k\subset \partial \Delta^k$ can be defined as $(\Lambda_i^k)([m]) = \{\alpha\in \mathrm{Mor}_\Delta ([m], [k])\::\: [k]\not\subseteq \alpha([m])\cup \{i\}\}$.
\end{itemize}

Let $X$ be a simplicial object in $\CCC$, in other words, a functor $X: \Delta^{\mathrm{op}}\to \CCC$; and let $S$ be a simplical set. Dual to the colimit expression of a simplicial set, we define 
\[\textrm{Hom}(S, X):= \lim_{\Delta^k \to S} X_k.\]

\begin{definition}[$n$-groupoids, Definition 2.5, Wolfson]
	Let $n\in \N\cup \{\infty\}$. A \textbf{$n$-groupoid} is a simplicial object $X\in \mathrm{s}\CCC$ such that for all $k>0$ and $0\leq i\leq k$, $\Lambda_i^k(X)$ exists in $\CCC$ and the induced map
	\[\lambda_i^k (X): X_k\to \Hom(\Lambda_i^k, X)\]
	is a cover, and it is also an isomorphism for $k> n$. \\
	Here the map $\lambda_i^k (X)$ is induced from the simplicial subset $\Lambda_i^k\subset \Delta^k$. 
\end{definition}

\begin{definition}[$n$-Hypercovers, Definition 2.6, Wolfson]\label{Definition_Hypercovers}$ $
	\begin{enumerate}
		\item Let $n\in \N\cup\{\infty\}$. A map $f: X\to Y$ of simplicial objects in $\mathrm{s}\CCC$ is an \textbf{$n$-hypercover} if, for all $k\geq 0$, the map 
		\[\mu_k(f): X_k\to \Hom(\partial \Delta^k, X)\times_{\Hom(\partial \Delta^k , Y)} Y_k\]
		is a cover for all $k$, and it is an isomorphism for $k\geq n$. \\ 
		Here, the pullback on the right is given by two induced maps  to $\Hom(\partial \Delta^k , Y)$ from (1) $f:X\to Y$ and (2) $\partial \Delta^k\subset \Delta^k$; and the induced map $\lambda_k(f)$ is determined by the map $f_k: X_k\to Y_k$ and another map $X\to  \Hom(\partial \Delta^k, X)$ induced again by $\partial \Delta^k\subset \Delta^k$.
		\item We refer to an $\infty$-hypercover simply as a \textbf{hypercover}.
	\end{enumerate}
\end{definition}



\section{Categories of Fibrant Objects}

In order to get Theorem \ref{main theorem} as well as its corollar, Corollary \ref{corollary of main thm}, we need more prerequisites. First, we introduce Brown's category of fibrant objects as below (See the Definition below in Section 1 of \cite{BG17}). 

We take Behrend and Getzler\cite{BG17} and Rogers and Zhu \cite{RZ20} as our primary references.

\begin{definition}[Category of Fibrant Objects]
	A \textbf{category of fibrant objects} is a small category $\CCC$ with the subcategory of weak equivalences $\WWW\subseteq \CCC$ and a subcategory $\FFF\subseteq \CCC$ of fibrations, with the following axioms:
	\begin{enumerate}[label=$(\mathrm{F\arabic*})$, start=1]
		\item There exists a terminal object $e\in \mathrm{Obj}(\CCC)$, and a morphism $X\to e$ in $\FFF$ for each object $X\in \mathrm{Obj}(\CCC)$;
		\item Pullbacks of fibrations are fibrations;
		\item Pullbacks of trivial fibrations are trivial fibrations, where trivial fibrations are morphisms in both $\WWW$ and $\FFF$.
		\item Every morphism $f: X\to Y$ has a factorization
		\begin{small}
			\begin{center}
				\begin{tikzcd}
					X \arrow[dr, "r"']\arrow[rr, "f"]&& Y\\
					&P\arrow[ur, "q"']&
				\end{tikzcd}
			\end{center}
		\end{small}
		where $r$ is in $\WWW$ and $q$ is in $\FFF$.
	\end{enumerate}
	Objects in this category are said to be \textbf{fibrant}. And we will use \textbf{CFO} for ``category of fibrant objects''.
\end{definition}
Note that we only need the existence of the terminal object $e\in \mathrm{Obj}(\CCC)$, which does not further require a concrete choice. 

\begin{definition}[Exact Functors between CFOs]
	A functor $F: \CCC\to \DDD$ between CFOs is called an \textbf{exact functor} if 
	\begin{enumerate}
		\item[(1)] $F$ preserves terminal objects, fibrations and trivial fibrations; 
		\item[(2)] Any pullback square of the form
		\begin{center}
			\begin{tikzcd}
				X\times_Z Y\arrow[d, "g"']\arrow[r]&X\arrow[d, "f"]\\
				Y\arrow[r, "h"']&Z
			\end{tikzcd}
		\end{center}
		with $f$ being a fibration in $\CCC$ is mapped to a pullback square in $\DDD$, i.e., 
		$F(X\times_Z Y)\cong F(X) \times_{F(Z)} F(Y)$.
	\end{enumerate}
\end{definition}
The following lemma is from Roger and Zhu \cite{RZ20}, Lemma 2.9.
\begin{lemma}
	An exact functor $F: \CCC\to \DDD$ between CFOs preserves finite products and weak equivalences.
\end{lemma}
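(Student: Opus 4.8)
The plan is to verify the two conclusions---preservation of finite products and of weak equivalences---separately, reducing each to the two defining axioms of an exact functor.

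For finite products: the key observation is that a finite product $X \times Y$ in a category of fibrant objects is computed as the pullback $X \times_e Y$, where $e$ is the terminal object, and the projection $X \to e$ is a fibration by axiom $(\mathrm{F1})$. Thus the product square
\begin{center}
	\begin{tikzcd}
		X\times_e Y\arrow[d]\arrow[r]&X\arrow[d]\\
		Y\arrow[r]&e
	\end{tikzcd}
\end{center}
is a pullback square along a fibration in $\CCC$. First I would apply clause $(1)$ of exactness to see that $F(e)$ is terminal in $\DDD$, then apply clause $(2)$ to this particular square to get $F(X \times_e Y) \cong F(X) \times_{F(e)} F(Y)$; since $F(e)$ is terminal, the right-hand side is precisely the product $F(X) \times F(Y)$ in $\DDD$. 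The empty product is handled by clause $(1)$ alone.

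For weak equivalences: here one invokes the standard structure theory of Brown's categories of fibrant objects. The factorization axiom $(\mathrm{F4})$ combined with $(\mathrm{F1})$, $(\mathrm{F2})$ lets one produce path objects, and every weak equivalence can be factored through a trivial fibration composed with a section of a trivial fibration (a ``trivial cofibration''-like map); equivalently, by Brown's lemma every weak equivalence $f : X \to Y$ factors as $X \xrightarrow{s} P \xrightarrow{p} Y$ where $p$ is a trivial fibration and $s$ is a section of a trivial fibration $P \to X$. Since $F$ preserves trivial fibrations by clause $(1)$, it sends $p$ to a trivial fibration, hence a weak equivalence; and it sends a section of a trivial fibration to a section of a trivial fibration $F(P) \to F(X)$---using clause $(2)$ to see that the relevant pullback defining the section is preserved---which by the two-out-of-three property for weak equivalences in $\DDD$ is again a weak equivalence. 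Composing, $F(f)$ is a weak equivalence.

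I expect the main obstacle to be the weak-equivalence half: one must be careful that the path-object construction and Brown's factorization only use pullbacks \emph{along fibrations} (so that clause $(2)$ of exactness applies), and that the ``section of a trivial fibration'' maps are genuinely preserved---this requires checking that $F$ takes the diagonal-type pullback squares arising in the path object $P$ to pullback squares in $\DDD$. Since these squares are all pullbacks along fibrations by construction, exactness does apply, but laying out the factorization carefully (rather than quoting it as a black box) is where the real content lies; the product statement is essentially immediate. For the details of Brown's factorization in this axiomatic setting I would refer to \cite{BG17} and \cite{RZ20}.
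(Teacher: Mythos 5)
Your proof is correct and is essentially the standard argument: the paper itself does not prove this lemma but imports it as Lemma 2.9 of Rogers--Zhu \cite{RZ20}, whose proof runs exactly along your lines (finite products as pullbacks $X\times_e Y$ along the fibration $X\to e$ guaranteed by $(\mathrm{F1})$, then clauses $(1)$ and $(2)$ of exactness; weak equivalences via Brown's factorization lemma plus two-out-of-three). One small simplification: you do not need clause $(2)$ to see that $F(s)$ remains a section of a trivial fibration --- functoriality alone gives $F(q)\circ F(s) = F(q\circ s) = \mathrm{id}_{F(X)}$ with $F(q)$ a trivial fibration by clause $(1)$, and two-out-of-three in $\DDD$ then finishes the weak-equivalence half.
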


\begin{definition}
	An \textbf{equivalence $F: \CCC\to \DDD$ of categories with covers and finite limits} is a category equivalence preserving covers. 
\end{definition}

As an important example in the note, we may define $n$-Grpds$(\CCC)$ be the $1$-category of $n$-groupoids in $\CCC$, a category with covers and finite limit. We leave the following lemma as an exercise:
\begin{lemma}\label{Equivalence of Cats implies Equivalence of n Grpds}
	Suppose that $F: \CCC\to \DDD$ is an equivalence of categories with covers and finite limits. It induces an exact equivalence of ($1$-)categories:
	\[F: n\text{-}\mathrm{Grpds}(\CCC)\xrightarrow{\sim}n\text{-}\mathrm{Grpds}(\DDD).\]
\end{lemma}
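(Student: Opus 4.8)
The plan is to check the three defining conditions of an exact equivalence of $1$-categories directly, building the inverse functor from a quasi-inverse $G\colon\DDD\to\CCC$ of $F$ and using the functoriality of the $n$-groupoid construction. First I would observe that $n\text{-}\mathrm{Grpds}(-)$ is itself a functor on the category whose objects are categories with covers and finite limits and whose morphisms are functors preserving finite limits and covers: given such a $\Phi\colon\AAA\to\BBB$, postcomposition with $\Phi$ sends a simplicial object $X_\bullet\colon\Delta^{\mathrm{op}}\to\AAA$ to $\Phi\circ X_\bullet$, and since $\Phi$ preserves finite limits it commutes with the formation of $\Hom(\Lambda^k_i,-)$ and $\Hom(\partial\Delta^k,-)$, so the horn maps $\lambda^k_i(\Phi X)$ are identified with $\Phi(\lambda^k_i(X))$; because $\Phi$ preserves covers and isomorphisms, $\Phi X$ is again an $n$-groupoid. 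On morphisms, a map of $n$-groupoids $f\colon X\to Y$ is just a natural transformation of simplicial objects, and $\Phi f$ is again one. This gives a well-defined functor $\Phi_*\colon n\text{-}\mathrm{Grpds}(\AAA)\to n\text{-}\mathrm{Grpds}(\BBB)$, and clearly $(\mathrm{id})_*=\mathrm{id}$ and $(\Psi\circ\Phi)_*=\Psi_*\circ\Phi_*$.

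Next I would apply this to $F$ and to a quasi-inverse $G$, together with natural isomorphisms $\eta\colon\mathrm{id}_\CCC\xRightarrow{\sim} G\circ F$ and $\varepsilon\colon F\circ G\xRightarrow{\sim}\mathrm{id}_\DDD$. One needs $G$ to again be a morphism of categories with covers and finite limits: it preserves finite limits because it is part of an adjoint equivalence (equivalences preserve all limits), and it preserves covers because $F$ does, $F$ is fully faithful, and covers are stable under isomorphism — concretely, if $d'\to d$ is a cover in $\DDD$, transport it through $\varepsilon$ and $F$ to see it is isomorphic to the image of a cover in $\CCC$. Then $F_*$ and $G_*$ are functors between $n\text{-}\mathrm{Grpds}(\CCC)$ and $n\text{-}\mathrm{Grpds}(\DDD)$, and whiskering $\eta$ and $\varepsilon$ levelwise produces natural isomorphisms $\mathrm{id}\xRightarrow{\sim}G_*F_*$ and $F_*G_*\xRightarrow{\sim}\mathrm{id}$ of endofunctors on the respective $n$-groupoid categories (a natural transformation of simplicial objects that is a levelwise isomorphism is an isomorphism of simplicial objects, hence of $n$-groupoids). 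This establishes that $F_*$ is an equivalence of $1$-categories.

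Finally I would verify exactness of $F_*$ with respect to the CFO structures on $n\text{-}\mathrm{Grpds}(\CCC)$ and $n\text{-}\mathrm{Grpds}(\DDD)$, whose fibrations are the $n$-hypercovers and whose weak equivalences are the trivial $n$-hypercovers (in the sense set up in the cited references of Behrend–Getzler and Wolfson). Since the maps $\mu_k(f)$ defining an $n$-hypercover, and the horn maps defining $n$-groupoids, are assembled from finite limits, covers, and isomorphisms — all preserved by $F$ — the functor $F_*$ carries $n$-hypercovers to $n$-hypercovers and trivial ones to trivial ones; it preserves the terminal $n$-groupoid because $F$ preserves terminal objects; and it preserves the relevant pullbacks because $F$ does and these pullbacks are computed levelwise. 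The main obstacle I anticipate is purely bookkeeping rather than conceptual: one must make sure the levelwise limits $\Hom(\Lambda^k_i,X)$, $\Hom(\partial\Delta^k,X)$ and the fiber products in the definition of $\mu_k(f)$ are genuinely built from \emph{finite} limits (they are, since $\Lambda^k_i$ and $\partial\Delta^k$ have finitely many nondegenerate simplices in each relevant degree and one uses the skeletal/coskeletal description), so that $F$'s preservation of finite limits actually applies; and one must check that the CFO axiom (F4), the factorization, is handled — here one invokes that the factorizations in $n\text{-}\mathrm{Grpds}(\CCC)$ are constructed functorially from the category-with-covers data (as in Behrend–Getzler), so they too are preserved. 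Spelling out this last point is where I would be most careful, but it requires no new idea beyond the functoriality already established.
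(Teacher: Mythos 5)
The paper does not actually prove this lemma --- it is explicitly ``left as an exercise'' --- so there is no authorial proof to compare against; your proposal supplies the standard argument one would expect (postcomposition, preservation of the finite limits $\Hom(\Lambda^k_i,X)$ and $\Hom(\partial\Delta^k,X)$ and of covers, whiskering the unit and counit levelwise), and in outline it is right. Two points deserve attention. First, and substantively: your justification that the quasi-inverse $G$ preserves covers is circular. You say that a cover $d'\to d$ in $\DDD$ is, via $\varepsilon$, isomorphic to $F(G(d'))\to F(G(d))$, i.e.\ to ``the image of a cover in $\CCC$'' --- but whether $G(d')\to G(d)$ is a cover in $\CCC$ is exactly what you are trying to establish. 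The paper's definition of an equivalence of categories with covers only demands that $F$ \emph{preserve} covers, and preservation does not formally imply reflection (consider the identity functor from a category equipped with a smaller class of covers to the same category with a larger one). You should either read the hypothesis as an equivalence in the $2$-category of categories with covers (so that a quasi-inverse preserving covers is part of the data --- which is what holds in the intended application, the fiber functor of Theorem \ref{SchemeGaloisCorrespondence}), or add an argument that $F$ reflects covers; without one of these, essential surjectivity of $F_*$ fails, since an $n$-groupoid in $\DDD$ need not lift to one in $\CCC$.

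Second, two smaller remarks. You describe the CFO structure as having the $n$-hypercovers as fibrations and ``trivial $n$-hypercovers'' as weak equivalences; in the paper's setup (Lemma \ref{n groupoids form a CFO} and Lemma \ref{Lem3-25onBG}) the fibrations are the maps whose relative horn-filling maps $X_k\to \Hom(\Lambda^k_i,X)\times_{\Hom(\Lambda^k_i,Y)}Y_k$ are covers, and it is the \emph{trivial} fibrations that coincide with the hypercovers of Definition \ref{Definition_Hypercovers}. This does not damage your argument, since both classes are built from finite limits and covers and hence are preserved. Also, your worry about axiom (F4) is unnecessary: the paper's definition of an exact functor asks only for preservation of terminal objects, fibrations, trivial fibrations, and pullbacks along fibrations (weak equivalences then come for free by the cited Lemma 2.9 of Rogers--Zhu); it does not require preservation of the factorizations themselves.
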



We introduce a few more definitions and results for $n$-groupoids and also prove our main theorem and its corollary in this section. 

\begin{definition}[Categories with Weak Equivalences]
	A category $\CCC$ is called a \textbf{category with weak equivalence} if there is a distinguished set $\WWW$ of morphisms in $\CCC$, whose elements are called \textbf{weak equivalences}, such that
	\begin{itemize}
		\item $\WWW$ contains all the isomorphisms in $\CCC$;
		\item $\WWW$ has $2$ out of $3$ property, in other words, given $f: X\to Y$, $g: Y\to Z$ and the composition $gh: X\to Z$ in $\CCC$, if two out of three above are in $\WWW$, the third is also in $\WWW$.
	\end{itemize}
	Denote the category with weak equivalences by a pair $(\CCC, \WWW)$ (or just $\CCC$ if the context is clear).
\end{definition}

\begin{definition}[Weak Equivalences for $n$-Groupoids]$ $
	\begin{enumerate}
		\item Let $P_n: \mathrm{s}\CCC\to \mathrm{s}\CCC$ be a functor of simplicial objects such that 
		$(P_nY)_m:= \Hom(\Delta^{m, n}, Y)$,
		where $\Delta^{m, n} = \Delta^m\times \Delta^n$.
		\item A morphism $f:X\to Y$ of $k$-groupoids is a \textbf{weak equivalence} if the fibration
		$q(f): P(f)\to Y$
		is a hypercover, where $P(f) = X\times_{f, Y, d_1^*} P_1Y$, and $q(f)$ is the pullback of $f$ along $d_0^*: P_1Y\to Y$. 
	\end{enumerate}
\end{definition}

\begin{lemma}[Proposition 6.7, Rogers and Zhu]\label{Lem3-25onBG}
	A fibration $f: X\to Y$ of $n$-groupoids is a weak equivalence if and only if it is a hypercover.
\end{lemma}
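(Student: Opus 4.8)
## Proof Proposal for Lemma \ref{Lem3-25onBG}

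The plan is to prove the two directions separately, with the ``only if'' direction being essentially a definitional unwinding and the ``if'' direction requiring the real work. For the forward implication, suppose $f: X\to Y$ is a fibration of $n$-groupoids that is a weak equivalence. By definition of weak equivalence, the pullback $q(f): P(f)\to Y$ is a hypercover, where $P(f) = X\times_{f, Y, d_1^*} P_1 Y$. The key observation is that since $d_0^*: P_1 Y\to Y$ is itself a trivial fibration (this is the standard fact that $P_1 Y$ is a path object for $Y$ in the category of fibrant objects of $n$-groupoids, so both face maps $d_0^*, d_1^*: P_1 Y\to Y$ are weak equivalences and one checks $d_1^*$ is a fibration), the map $P(f)\to X$ obtained by pulling back $d_0^*$ along $f$ is also a trivial fibration; hence $P(f)\to X$ is a hypercover. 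Then I would compose: $X$ and $P(f)$ differ by a hypercover, and $P(f)\to Y$ is a hypercover, so one needs a ``two-out-of-three''-type statement for hypercovers among fibrations, or more directly, that $f$ factors (up to the hypercover $P(f)\to X$) as a composite of hypercovers. Concretely, $q(f) = f\circ (\text{projection } P(f)\to X)$ after identifying along the pullback; since $q(f)$ is a hypercover and $P(f)\to X$ is a trivial fibration hence a hypercover, and hypercovers satisfy a cancellation property analogous to axiom $(\mathrm{C3})$ for covers (checking the maps $\mu_k$ levelwise), we conclude $f$ is a hypercover.

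For the reverse implication — a fibration $f: X\to Y$ that is a hypercover is a weak equivalence — I need to show $q(f): P(f)\to Y$ is a hypercover. Here the main input is that hypercovers are stable under pullback (this follows from axiom $(\mathrm{C2})$: the defining maps $\mu_k(f)$ involve only finite limits $\Hom(\partial\Delta^k, -)$ and fibered products, all of which commute with base change, and covers pull back to covers). We have $P(f) = X\times_{f, Y, d_1^*} P_1 Y$ with $q(f)$ the pullback of $f$ along $d_0^*: P_1 Y\to Y$; so it suffices to observe that $q(f)$ is the base change of the hypercover $f$ along the map $d_0^*$, hence is itself a hypercover. One subtlety to address carefully: the identification of $q(f)$ as a pullback of $f$ requires the Beck–Chevalley / pasting-of-pullbacks argument using the two structure maps of $P_1 Y$, which should be spelled out with a commutative diagram of pullback squares. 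I would include such a diagram showing $P(f)$ sitting over both $X$ and $P_1 Y$, with the outer rectangle exhibiting $q(f)$ as the pullback of $f$ along $d_0^*$.

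The main obstacle I anticipate is the ``only if'' direction: establishing that $P(f)\to X$ is a trivial fibration (equivalently a hypercover) requires knowing that $P_1 Y\to Y$ via $d_1^*$ is a trivial fibration in $n$-Grpds$(\CCC)$, which is part of the path-object construction and may need to be cited from Behrend–Getzler or Rogers–Zhu rather than reproved, and then the cancellation property for hypercovers among fibrations — that if $gf$ and $f$ are hypercovers with $f$ a fibration then $g$ is a hypercover — must be verified levelwise on the maps $\mu_k$. This levelwise check is where I would be most careful: one uses that $\mu_k(gf)$ factors through $\mu_k(f)$ composed with a base change of $\mu_k(g)$, and invokes axiom $(\mathrm{C3})$ of the category with covers to cancel. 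The rest — stability of hypercovers under pullback, the pasting lemma for pullback squares — is routine diagram-chasing that I would state without belaboring, citing the relevant lemmas in \cite{BG17} and \cite{RZ20}.
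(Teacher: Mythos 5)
First, note that the paper itself gives no proof of this lemma: it is imported verbatim from Rogers--Zhu (Proposition~6.7; the label indicates it is Lemma~3.25 of Behrend--Getzler), so your attempt has to be judged on its own rather than against an argument in the text. Your ``if'' direction is essentially sound: with $P(f)=X\times_{f,Y,d_1^*}P_1Y$ one has $q(f)=d_0^*\circ \mathrm{pr}_{P_1Y}$, where $\mathrm{pr}_{P_1Y}$ is the base change of $f$ along $d_1^*$; so once one knows that hypercovers are stable under pullback, closed under composition, and that the evaluation maps $d_0^*,d_1^*:P_1Y\to Y$ are hypercovers (this last point should be verified directly from the explicit description of $P_1Y$, not deduced from ``trivial fibration $\Rightarrow$ hypercover,'' which is the statement being proved and would make the argument circular), the conclusion follows. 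Your phrasing that $q(f)$ ``is the base change of $f$ along $d_0^*$'' is not literally correct, but the pasting argument you allude to repairs it.

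The genuine gap is in the ``only if'' direction, and it sits exactly where you placed the weight of the argument. The identity $q(f)=f\circ\mathrm{pr}_X$ is false: by the defining pullback square, $f\circ\mathrm{pr}_X=d_1^*\circ\mathrm{pr}_{P_1Y}$, whereas $q(f)=d_0^*\circ\mathrm{pr}_{P_1Y}$; these are the two distinct endpoint evaluations of the path component and agree only up to homotopy, not on the nose. Consequently the cancellation property for hypercovers (the levelwise analogue of $(\mathrm{C3})$, which lets you cancel the \emph{first}-applied map out of a composite) has nothing to bite on: $f$ does not occur as the outer factor of a composite of the two hypercovers $q(f)$ and $\mathrm{pr}_X$. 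Nor can you reach $f$ through $f=q(f)\circ r(f)$, since the canonical section $r(f)$ is not a cover; and cancelling $d_0^*$ out of $q(f)=d_0^*\circ\mathrm{pr}_{P_1Y}$ to get at $\mathrm{pr}_{P_1Y}$ requires cancelling the \emph{outer} factor, which $(\mathrm{C3})$ does not permit. This direction is precisely the technical heart of Behrend--Getzler's Lemma~3.25 and Rogers--Zhu's Proposition~6.7: one must use the fibration hypothesis on $f$ to produce fillers and run an inductive argument on the matching maps $\mu_k(f)$, rather than a formal two-out-of-three manipulation. As written, your proof of this implication does not go through.
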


The following is a special case of Theorem 3.6 on Behrend-Getzler \cite{BG17}:

\begin{lemma}\label{n groupoids form a CFO}
	Suppose that $\CCC$ is a category with covers and finite limits. With weak equivalences as above definition, the category of $n$-groupoids in $\CCC$, i.e., $n$-$\mathrm{Grpds}(\CCC)$ is a CFO. Here, the fibrations in $n$-$\mathrm{Grpds}(\CCC)$ are given by morphisms $f: X\to Y$ of $n$-groupoid objects in $\CCC$ with the condition: for all $k\geq 0$, the map 
	\[X_k\to \Hom(\Lambda_i^k, X)\times_{\Hom(\Lambda_i^k , Y)} Y_k\]
	is a cover for all $k$, and for all $0\leq i\leq k$.  
\end{lemma}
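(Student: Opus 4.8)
\textbf{Proof proposal for Lemma \ref{n groupoids form a CFO}.}

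The plan is to verify the four axioms (F1)--(F4) for the category $n\text{-}\mathrm{Grpds}(\CCC)$ with the stated fibrations and with weak equivalences as defined. Since this is asserted to be a special case of Theorem 3.6 of Behrend--Getzler \cite{BG17}, the work is essentially to check that the hypotheses of that theorem hold for our axiomatization of ``category with covers and finite limits'' and then to transcribe the conclusion; but let me describe the verification directly. First I would record the structural facts about $n$-groupoids that make everything run: the constant simplicial object on $e$ is a terminal $n$-groupoid and every $n$-groupoid $X$ admits $X\to e$ as a fibration (this uses (C0) for existence of the relevant limits, (C1) to see the structure maps to $e$ are covers, and the $n$-groupoid condition that the horn maps $\lambda_i^k(X)$ are covers), which gives (F1). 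For (F2), given a fibration $f\colon X\to Y$ of $n$-groupoids and an arbitrary map $Z\to Y$, I would form the levelwise pullback $X\times_Y Z$ in $\mathrm{s}\CCC$, check it is again an $n$-groupoid (the horn objects and the horn maps of a levelwise pullback are computed as pullbacks of the horn data of $X$, $Y$, $Z$, and covers are stable under pullback by (C2)), and then check the map $X\times_Y Z\to Z$ satisfies the fibration condition, again because the relevant comparison map is a base change of $\mu$-type map for $f$ and covers pull back to covers; the isomorphism range ($k>n$, resp.\ $k\geq n$) is preserved because isomorphisms pull back to isomorphisms.

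Axiom (F3) is the analogous statement for trivial fibrations: a trivial fibration is a fibration that is also a weak equivalence, and by Lemma \ref{Lem3-25onBG} (Proposition 6.7 of Rogers--Zhu) a fibration of $n$-groupoids is a weak equivalence precisely when it is a hypercover. So (F3) reduces to: the pullback of a hypercover-fibration along an arbitrary map of $n$-groupoids is again a hypercover-fibration. This is once more a base-change argument using stability of covers and isomorphisms under pullback, applied to the defining maps $\mu_k(f)$ of Definition \ref{Definition_Hypercovers}, combined with (F2). The only subtlety is to make sure the pullback is taken in $n\text{-}\mathrm{Grpds}(\CCC)$, i.e.\ that $n$-groupoids are closed under the pullbacks in question, which we established in the (F2) step. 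Axiom (F4), the factorization of an arbitrary morphism $f\colon X\to Y$ as a weak equivalence followed by a fibration, is the substantive point: the standard construction is the ``mapping path space'' $P(f) = X\times_{f,Y,d_1^*} P_1 Y$ already introduced in the definition of weak equivalence, where $P_1 Y = \mathrm{Hom}(\Delta^{\bullet}\times\Delta^1, Y)$; one factors $f$ as $X\to P(f)\xrightarrow{q(f)} Y$, shows the first map is a section of a trivial fibration hence a weak equivalence, and shows $q(f)$ is a fibration. Here one needs that $P_1 Y$ is an $n$-groupoid (a Reedy-type argument: the path functor $P_n$ of the Weak-Equivalences definition preserves $n$-groupoids because $\mathrm{Hom}(\Delta^{m,n},-)$ preserves the cover and isomorphism conditions on horn maps), that the two evaluation maps $d_0^*, d_1^*\colon P_1 Y\to Y$ are trivial fibrations, and that $P(f)$ is an $n$-groupoid, which follows from closure under pullback along a fibration.

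The main obstacle is (F4), and within it the verification that the path-space $P_1 Y$ is again an $n$-groupoid and that the evaluation maps $d_0^*,d_1^*\colon P_1 Y\to Y$ are trivial fibrations: this is where the combinatorics of horn inclusions and the product $\Delta^m\times\Delta^1$ enter, and it is exactly the place where one must invoke the anodyne/Reedy machinery from Behrend--Getzler rather than a one-line base-change argument. My plan is to isolate this as a sublemma, prove the ``path object'' claim by the pushout-product (Leibniz) argument applied to $\partial\Delta^1\hookrightarrow\Delta^1$ against horn inclusions $\Lambda_i^k\hookrightarrow\Delta^k$ --- so that the induced maps are covers because they are built from the cover maps $\lambda_i^k$ of $Y$ by iterated pullback and composition using (C0)--(C3) --- and then conclude (F4) and hence the lemma. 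The remaining axioms (F1)--(F3) I expect to be routine once the closure of $n$-groupoids under the relevant finite limits is recorded, since they are all instances of ``covers and isomorphisms are stable under base change,'' which is precisely axiom (C2) together with the elementary fact (C3) used to recognize composites.
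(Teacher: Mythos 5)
Your proposal is correct in outline and follows essentially the same route as the paper, which gives no independent argument for this lemma but simply records it as a special case of Theorem 3.6 of Behrend--Getzler; your sketch (terminal object and base-change arguments for (F1)--(F3), the mapping-path-space factorization with the pushout-product/expansion machinery for (F4)) is precisely the content of that cited theorem and its proof. The one small point to watch is that proving the evaluation maps $d_\epsilon^*\colon P_1Y\to Y$ are \emph{trivial} fibrations requires the pushout-product of $\{\epsilon\}\hookrightarrow\Delta^1$ against the boundary inclusions $\partial\Delta^k\hookrightarrow\Delta^k$ (not only the horn inclusions you mention), but this is covered by the same Behrend--Getzler lemmas you invoke.
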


\begin{remark}
	Together with the previous lemma, we obtain that trivial fibrations are the same as hypercovers.
\end{remark}

Recall a simplicial group $G$ is a simplicial object in the category of groups. We have a natural way to define the simplicial group action $G$ on a simplicial set $X$. One can refer to Section V.2, Goerss and Jardine \cite{GJ09} for more details. Then, one can immediately define categories like $G$-sSets, $G$-$n$-Grpds, $G$-$n$-FinGrpds and $G$-$n$-stacks.

The following lemma is an exercise in the above definitions:
\begin{lemma}\label{Pull Out Simplicial group action out of n-Gprds(-)}
	Suppose that $\CCC$ is a category with covers and finite limits. Then we have an equivalence of usual categories:
	\[G\text{-}n\text{-}\mathrm{Gprds}(\mathrm{FinSets})\simeq n\text{-}\mathrm{Gprds}(G\text{-}\mathrm{FinSets}).\]
\end{lemma}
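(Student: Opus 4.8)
The plan is to exhibit an explicit functor in each direction and check that they are mutually inverse, using the description of $G$-actions as functors from a one-object groupoid together with the fact that limits in a functor (equivariant) category are computed objectwise. First I would set up notation: a simplicial object $X_\bullet \in n\text{-}\mathrm{Grpds}(G\text{-}\mathrm{FinSets})$ is, for each $[k]$, a finite set $X_k$ equipped with a $G$-action (a simplicial one, i.e. compatible across the simplicial structure maps, so that all $d_i, s_j$ are $G$-equivariant), and the horn-filling covers $\lambda_i^k(X)$ are required to exist \emph{in} $G\text{-}\mathrm{FinSets}$ and be covers there. On the other side, an object of $G\text{-}n\text{-}\mathrm{Grpds}(\mathrm{FinSets})$ is an $n$-groupoid $X_\bullet$ in $\mathrm{FinSets}$ together with a simplicial $G$-action on the whole simplicial object by automorphisms of $n$-groupoids. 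The underlying data on each level $X_k$ is literally the same in both cases; the only thing to verify is that the two \emph{conditions} (being an $n$-groupoid, and $G$-equivariance) match up under this identification.

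The key step is the observation that the forgetful functor $U\colon G\text{-}\mathrm{FinSets}\to \mathrm{FinSets}$ creates finite limits: if $G$ acts on a finite diagram, the limit in $\mathrm{FinSets}$ carries a canonical $G$-action making it the limit in $G\text{-}\mathrm{FinSets}$, and $U$ of that limit is the limit of $U$ applied to the diagram. Consequently, for a simplicial object $X_\bullet$ with a level-wise $G$-action, the matching objects $\Hom(\Lambda_i^k, X)$, $\Hom(\partial\Delta^k, X)$, and the various fiber products appearing in the definitions of $n$-groupoid, $n$-hypercover, and fibration inherit a canonical $G$-action, and computing them in $G\text{-}\mathrm{FinSets}$ versus computing them in $\mathrm{FinSets}$ and then remembering the induced action give the same answer. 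Moreover a $G$-equivariant map between two $G$-objects is a cover in $G\text{-}\mathrm{FinSets}$ if and only if its underlying map is a cover in $\mathrm{FinSets}$ — here I would invoke (C4), since covers are effective epimorphisms and a $G$-equivariant effective epi whose underlying map is an effective epi is again one, plus the fact that the cover structure on $\mathrm{FinSets}$ is that of surjections. Granting this, the map $\lambda_i^k(X)\colon X_k\to \Hom(\Lambda_i^k, X)$ is a $G$-equivariant cover (resp.\ iso for $k>n$) in $G\text{-}\mathrm{FinSets}$ precisely when it is a cover (resp.\ iso) in $\mathrm{FinSets}$ compatibly with the $G$-actions, which is exactly the condition that $X_\bullet$, viewed with its level-wise action, is an $n$-groupoid in $\mathrm{FinSets}$ on which $G$ acts by $n$-groupoid automorphisms. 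This establishes a bijection on objects; the same bookkeeping shows a map $f\colon X_\bullet\to Y_\bullet$ is a morphism of $n$-groupoids-in-$G\text{-}\mathrm{FinSets}$ iff it is a $G$-equivariant morphism of $n$-groupoids-in-$\mathrm{FinSets}$, giving a bijection on Hom-sets, hence an isomorphism (in particular an equivalence) of categories.

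Finally I would note that under this identification fibrations and (trivial) fibrations correspond on both sides, because Lemma \ref{n groupoids form a CFO} characterizes fibrations by a level-wise cover condition on $X_k\to\Hom(\Lambda_i^k,X)\times_{\Hom(\Lambda_i^k,Y)}Y_k$, and this transports along $U$-creates-limits exactly as above; so the equivalence is in fact exact, though the statement as phrased only asks for an equivalence of usual categories.

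\textbf{Main obstacle.} The only genuinely nontrivial point is the claim that $U\colon G\text{-}\mathrm{FinSets}\to\mathrm{FinSets}$ creates finite limits \emph{and} reflects/preserves the cover structure — in particular that the horn and boundary matching objects, which are defined via the formula $\Hom(S,X)=\lim_{\Delta^k\to S}X_k$, are computed the same way in both categories so that "$\lambda_i^k(X)$ exists in $\mathrm{FinSets}$ and is a cover" is equivalent to "$\lambda_i^k(X)$ exists in $G\text{-}\mathrm{FinSets}$ and is a cover there". Once that is pinned down (it is standard: $G\text{-}\mathrm{FinSets}$ is the functor category $\mathrm{Fun}(BG,\mathrm{FinSets})$ and limits in functor categories are objectwise, while surjectivity is detected objectwise), everything else is the routine unwinding of definitions indicated above, which is why the paper states it as an exercise.
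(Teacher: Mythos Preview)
Your proposal is correct and is exactly the intended argument: the paper does not supply a proof of this lemma at all, stating only that it ``is an exercise in the above definitions.'' Your reduction to the fact that $G\text{-}\mathrm{FinSets}\simeq\mathrm{Fun}(BG,\mathrm{FinSets})$ has objectwise finite limits and that covers (surjections) are detected on underlying sets is precisely the unwinding the paper has in mind, and your additional remark that the equivalence is exact is a bonus the paper uses implicitly in the proof of Theorem~\ref{Galois Correspodence n Groupoids}.
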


\section{The Main Theorem and Its Corollary}\label{Section for Corollary}
\label{Section for main thm}
We now prove our main theorem.

\begin{theorem}[Galois Correspondence for $n$-groupoids]\label{Galois Correspodence n Groupoids}
	There is an equivalence of categories
	\[F_\eta: n\text{-}\mathrm{Grpds}(\mathrm{FEt}_X)\xrightarrow{\sim} \pi_1^{\mathrm{\acute{e}t}}(X, \eta)\text{-}n\text{-}\mathrm{Grpds}(\mathrm{FinSets}).\]
\end{theorem}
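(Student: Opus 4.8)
The plan is to derive Theorem \ref{Galois Correspodence n Groupoids} from the classical schematic Galois correspondence (Theorem \ref{SchemeGaloisCorrespondence}) by leveraging the functoriality results assembled in the previous sections, rather than reproving anything at the level of $n$-groupoids directly. The key observation is that $n$-Grpds$(-)$ is a functor on categories with covers and finite limits, and by Lemma \ref{Equivalence of Cats implies Equivalence of n Grpds} it sends equivalences of such categories to exact equivalences of the associated $1$-categories of $n$-groupoids. So the whole theorem reduces to exhibiting, on the level of the underlying \emph{base} categories, an equivalence
\[
\mathrm{FEt}_X \;\xrightarrow{\ \sim\ }\; \pi_1^{\mathrm{\acute{e}t}}(X,\eta)\text{-}\mathrm{FinSets}
\]
that preserves covers, and then transporting it through $n$-Grpds$(-)$ and Lemma \ref{Pull Out Simplicial group action out of n-Gprds(-)}.

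First I would set up the comparison functor on base categories. The fiber functor $F^{\mathrm{Sch}}_\eta$ of Theorem \ref{SchemeGaloisCorrespondence} is already an equivalence of categories $\mathrm{FEt}(X)\to \pi_1^{\mathrm{\acute{e}t}}(X,\eta)\text{-}\mathrm{FinSets}$; I would just need to check two bookkeeping points. One: the source category as used in Theorem \ref{Galois Correspodence n Groupoids} is $\mathrm{FEt}_X$, the category of finite \'etale \emph{surjective} covers, equipped with its natural class of covers (surjective finite \'etale maps), and this is a category with covers and finite limits --- fibre products of finite \'etale surjective maps are finite \'etale surjective, the terminal object is $X$ itself, axiom (C3) on pullbacks is standard descent for \'etale morphisms, and (C4) follows since a map that is finite \'etale and whose composite with a surjection is finite \'etale surjective is itself finite \'etale surjective. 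Two: on the target side, $\pi_1^{\mathrm{\acute{e}t}}(X,\eta)\text{-}\mathrm{FinSets}$ carries covers given by equivariant surjections of finite sets, and this is again a category with covers and finite limits (limits and colimits of finite $G$-sets are computed underlying). The content is then the observation that $F^{\mathrm{Sch}}_\eta$ matches these two notions of cover: a finite \'etale map $Y\to Y'$ over $X$ is surjective if and only if the induced map of fibers $\eta^*Y\to \eta^*Y'$ is surjective --- this uses that $X$ is connected so surjectivity can be detected at any geometric point, together with the fact that $F^{\mathrm{Sch}}_\eta$ is an equivalence and hence detects whether a monomorphism-complement is empty. So $F^{\mathrm{Sch}}_\eta$ is an equivalence of categories with covers and finite limits.

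Then the main theorem follows by a short chain. Applying $n$-Grpds$(-)$ and Lemma \ref{Equivalence of Cats implies Equivalence of n Grpds} to the equivalence $F^{\mathrm{Sch}}_\eta$ gives an exact equivalence of $1$-categories
\[
n\text{-}\mathrm{Grpds}(\mathrm{FEt}_X)\;\xrightarrow{\ \sim\ }\;n\text{-}\mathrm{Grpds}\big(\pi_1^{\mathrm{\acute{e}t}}(X,\eta)\text{-}\mathrm{FinSets}\big),
\]
and then Lemma \ref{Pull Out Simplicial group action out of n-Gprds(-)} identifies the right-hand side with $\pi_1^{\mathrm{\acute{e}t}}(X,\eta)\text{-}n\text{-}\mathrm{Grpds}(\mathrm{FinSets})$. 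The composite is the desired functor $F_\eta$; it is an equivalence as a composite of equivalences, and exact because each factor is. Finally, to get the stronger ``exact equivalence of categories of fibrant objects'' claimed in Theorem \ref{main theorem}, I would invoke Lemma \ref{n groupoids form a CFO} to equip both sides with their CFO structures (fibrations and weak equivalences as defined there), and observe that the equivalence $F_\eta$, being induced by an equivalence of the underlying categories with covers, automatically carries the generating data --- covers, hence fibrations in the sense of Lemma \ref{n groupoids form a CFO}, hence hypercovers, hence (by Lemma \ref{Lem3-25onBG}) trivial fibrations and weak equivalences --- across in both directions.

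The main obstacle I expect is not any single hard step but the verification that $F^{\mathrm{Sch}}_\eta$ genuinely preserves and reflects the cover structure, i.e., that the classes of ``covers'' chosen on $\mathrm{FEt}_X$ and on $\pi_1^{\mathrm{\acute{e}t}}(X,\eta)\text{-}\mathrm{FinSets}$ correspond exactly under the equivalence. This is where connectedness of $X$ is genuinely used --- without it, surjectivity of a finite \'etale map is not captured by surjectivity on a single fiber --- and it is the one place where one cannot simply cite Theorem \ref{SchemeGaloisCorrespondence} as a black box. Once that correspondence of covers is nailed down, everything else is formal: Lemmas \ref{Equivalence of Cats implies Equivalence of n Grpds} and \ref{Pull Out Simplicial group action out of n-Gprds(-)} do the transport, and Lemmas \ref{n groupoids form a CFO} and \ref{Lem3-25onBG} upgrade the conclusion to the CFO level.
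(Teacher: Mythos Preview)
Your proposal is correct and follows essentially the same route as the paper: start from the schematic fiber functor of Theorem \ref{SchemeGaloisCorrespondence}, apply Lemma \ref{Equivalence of Cats implies Equivalence of n Grpds} to pass to $n$-groupoids, and then use Lemma \ref{Pull Out Simplicial group action out of n-Gprds(-)} to pull the $\pi_1^{\mathrm{\acute{e}t}}(X,\eta)$-action outside. Your additional care in verifying that $F^{\mathrm{Sch}}_\eta$ matches the two classes of covers, and in spelling out the CFO upgrade via Lemmas \ref{n groupoids form a CFO} and \ref{Lem3-25onBG}, goes slightly beyond what the paper writes down but is exactly the content the paper is implicitly invoking.
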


\begin{proof}
	By Theorem \ref{SchemeGaloisCorrespondence}, $$F^{\mathrm{Sch}}_\eta: \mathrm{FEt}_X\to \pi_1^{\mathrm{\acute{e}t}}(X, \eta)\text{-}\mathrm{FinSets}$$ is a category equivalence. By Lemma \ref{Equivalence of Cats implies Equivalence of n Grpds}, the functor $$n\text{-}\mathrm{Grpds}(F^{\mathrm{Sch}}_\eta): n\text{-}\mathrm{Grpds}(\mathrm{FEt}_X)\to n\text{-}\mathrm{Grpds}(\pi_1^{\mathrm{\acute{e}t}}(X, \eta)\text{-}\mathrm{FinSets})$$
	is an equivalence. By Lemma \ref{Pull Out Simplicial group action out of n-Gprds(-)}, there is an equivalence 
	$$G: n\text{-}\mathrm{Grpds}(\pi_1^{\mathrm{\acute{e}t}}(X, \eta)\text{-}\mathrm{FinSets})\xrightarrow{\sim} \pi_1^{\mathrm{\acute{e}t}}(X, \eta)\text{-}n\text{-}\mathrm{Grpds}(\mathrm{FinSets}).$$
	Define $F_\eta = G\circ \left(n\text{-}\mathrm{Grpds}(F^{\mathrm{Sch}}_\eta)\right)$, this gives the equivalence we want. 
	\\
	Finally, since $F^{\mathrm{Sch}}_\eta$ in Theorem \ref{SchemeGaloisCorrespondence} preserves covers, by Lemma \ref{Equivalence of Cats implies Equivalence of n Grpds}, $F_\eta$ is an equivalence. 
\end{proof}

To show Corollary \ref{corollary of main thm}, we need more prerequisites about simplicial localization as below. 

Let $\CCC$ be a CFO with $\HHH$ be the subcategory of trivial fibrations, and $\WWW$ be the subcategory of weak equivalences. There are several different models of simplicial localizations. Note that Dwyer and Kan have two papers \cite{DK80a} and \cite{DK80b} on two different models of simplicial localizations. For a comprehensive introduction, one can read the book \cite{Bergner2018}, where the simplicial localizations are introduced in Section 3.5. In the two models, Hammock localization introduced by \cite{DK80b}, denoted by $\LLL^H(\CCC, \WWW)$ or simply by $\LLL(\CCC, \WWW)$ below, is easier to use. We will introduce hammock localization and another convenient model of simplicial localization denoted by $\HHH^{-1}\CCC$ below. Note that \cite{RZ20} introduces an equivalence between two models under good settings. 

\begin{enumerate}
	\item A widely used model is called
	``hammock localization'' of $\CCC$ with respect to $\WWW$, denoted by $\LLL^H(\CCC, \WWW)$, or just $\LLL(\CCC, \WWW)$ in this note. 
	Define $\LLL(\CCC, \WWW)$ to be a simplicial category whose objects are Obj$(\CCC)$, and for each pair of objects $X, Y$ in $\CCC$, $\LLL(\CCC, \WWW)(X, Y)$ is a simplicial set such that for each $k$, elements in $\LLL(\CCC, \WWW)(X, Y)_k$ are of the following format:
	\begin{center}
		
		\begin{tikzcd}
			&C_{0, 1}\arrow[d, , "\sim"]\arrow[ddl, ,dash]\arrow[r, dash]&C_{0, 2}\arrow[d, "\sim"]&\cdots&C_{0, n-1}\arrow[ddr, dash]\arrow[l, dash]\arrow[d, "\sim"]&\\
			&C_{1, 1}\arrow[d, "\sim"]\arrow[dl, dash]\arrow[r, dash]&C_{1, 2}\arrow[d, "\sim"]&\cdots&C_{1, n-1}\arrow[dr, dash]\arrow[l, dash]\arrow[d, "\sim"]&\\
			X&\vdots\arrow[d, "\sim"]&\vdots\arrow[d, "\sim"]&&\vdots\arrow[d, "\sim"]&Y\\
			&C_{k-1, 1}\arrow[d, "\sim"]\arrow[ul, dash]\arrow[r, dash]&C_{k-1, 2}\arrow[d, "\sim"]&\cdots&C_{k-1, n-1}\arrow[ur, dash]\arrow[l, dash]\arrow[d, "\sim"]&\\
			&C_{k, 1}\arrow[uul, dash]\arrow[r, dash]&C_{k, 2}&\cdots&C_{k, n-1}\arrow[uur, dash]\arrow[l, dash]&
		\end{tikzcd}

	\end{center}
	where $\sim$ means morphisms in $\WWW$, and $n$ is the length, and $k$ is the height. 
	\begin{enumerate}
		\item Here, each row
		\begin{center}
			
			\begin{tikzcd}
				X \arrow[r, dash] & C_{i, 1} \arrow[r, dash, ] &\cdots \arrow[r, dash]& C_{i, n-1}\arrow[r, dash, ]& Y
			\end{tikzcd}
			
		\end{center}
		is a \textit{zigzag} from $X$ to $Y$ of length $n\geq 0$. Here, a \textit{zigzag }from $X$ to $Y$ is given by 
		\begin{center}
			\begin{tikzcd}
				X \arrow[r, equal]& C_0\arrow[r, dash, "f_1"] & C_1 \arrow[r, dash, "f_2"] &\cdots \arrow[r, dash, "f_n"]& C_n \arrow[r, equal]& Y
			\end{tikzcd}
		\end{center}
		where the line segments $f_i$ can be forward or backward. 
		\item The vertical arrows and arrows in zigzags forwarding left are all in $\WWW$. 
		\item And all the horizontal arrows in the same column (e.g., arrows between $C_{i, j}$ and $C_{i, j+1}$ for a fixed $i$) have the same direction. 
		\item Finally, each zigzag rows are restricted (i.e., no adjacent arrows with the same direction, no identities, leftward arrows are in $\WWW$).
	\end{enumerate}
	\item In good settings, i.e. where we have a homotopy calculus of right fractions, a simpler equivalent model exists:
	Define $\HHH^{-1}\CCC$ be the simplicial
	category with 
	\[\mathrm{Obj}(\HHH^{-1}\CCC) = \mathrm{Obj}(\CCC),\;\;\;\;\;\; \mathrm{Mor}_{\HHH^{-1}\CCC} (x, y) = \NNN \mathrm{Span}(\CCC, \HHH),\]
	where objects in $\mathrm{Span}(\CCC, \HHH)$ are of the format $X\xleftarrow{f\in \HHH} Z\xrightarrow{g} Y$, and morphisms are of the form $\phi$ in $\CCC$ such that the following diagram commutes:
	\begin{center}
		\begin{tikzcd}
			X & Z\arrow[l, "f\in \HHH"']\arrow[r, "g"]\arrow[d, "\phi"] & Y\\
			& Z'\arrow[ul, "f'\in \HHH"]\arrow[ur, "g'"']&
		\end{tikzcd}
	\end{center}
	$\NNN$ here means the nerve.
	(This is given by the localization of the type $w$ where $w$ is a word of alphabets in $\{\CCC, \WWW^{-1}\}$, also see Dwyer and Kan \cite{DK80b}.)
\end{enumerate}

Then the following is a restatement of Theorem 2.13 in \cite{RZ20} for our context, which related the above two models of simplicial models in our context:
\begin{theorem}\label{Equivalence of Two Localizations} Suppose $\CCC$ is a CFO with a subcategory $\WWW$ of weak equivalences and a subcategory $\HHH$ of trivial fibrations.
	The (canonical) functor $\HHH^{-1}\CCC\to \LLL(\CCC, \WWW)$ is an equivalence of simplicial categories.
\end{theorem}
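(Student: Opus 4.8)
\emph{Proof proposal.}
The plan is to realize the canonical functor as a composite of two maps, each handled by a general principle from Dwyer--Kan \cite{DK80b}: first, that replacing the weak equivalences $\WWW$ by the smaller class $\HHH$ of trivial fibrations does not change the hammock localization up to weak equivalence; second, that once one localizes at a class admitting a homotopy calculus of right fractions, the hammock model collapses onto the nerve-of-spans model. Concretely, I would factor the functor as
\[\HHH^{-1}\CCC \longrightarrow \LLL(\CCC,\HHH) \longrightarrow \LLL(\CCC,\WWW),\]
where the second map is induced by the inclusion $\HHH \hookrightarrow \WWW$ (it is the identity on objects and includes each hammock with legs in $\HHH$ into the larger simplicial set of hammocks with legs in $\WWW$), and the first is the two-arrow-calculus reduction. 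It then suffices to prove that each map is an equivalence of simplicial categories, i.e. (since both are identities on objects) a levelwise weak equivalence of mapping simplicial sets.

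For the second map, the key inputs are the factorization axiom (F4) together with 2-out-of-3: any $w\in\WWW$ factors as $X\xrightarrow{r}P\xrightarrow{q}Y$ with $r\in\WWW$ and $q\in\FFF$, and then $q$ is automatically a trivial fibration. Applying this vertex-by-vertex to a hammock, every leftward and vertical arrow in $\WWW$ can be pushed, by elementary hammock moves, into one lying in $\HHH$; the resulting system of reductions assembles, via the reduction lemmas of \cite{DK80b}, into a deformation of $\LLL(\CCC,\WWW)(X,Y)$ onto its sub-simplicial set $\LLL(\CCC,\HHH)(X,Y)$, proving that the inclusion is a weak equivalence. For the first map, I would verify that the pair $(\CCC,\HHH)$ admits a homotopy calculus of right fractions in the sense of \cite{DK80b}: $\HHH$ contains the identities and is closed under composition; a cospan $A\xrightarrow{u}B\xleftarrow{f\in\HHH}C$ is completed by the pullback $A\xleftarrow{f'\in\HHH}A\times_B C\xrightarrow{u'}C$, with $f'\in\HHH$ by (F2)--(F3); and the remaining cancellation/homotopy conditions follow from the stability of trivial fibrations under pullback together with path-object (mapping-path) arguments available in any CFO. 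Granted this, Dwyer--Kan's reduction theorem identifies $\LLL(\CCC,\HHH)(X,Y)$ with the nerve of the category of spans $X\xleftarrow{f\in\HHH}Z\xrightarrow{g}Y$, that is with $\NNN\mathrm{Span}(\CCC,\HHH)=\HHH^{-1}\CCC(X,Y)$, compatibly with composition; this is exactly the assertion that the first map is an equivalence. Finally one checks that the composite agrees with the evident ``length-two hammock'' functor, which is bookkeeping.

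The main obstacle I anticipate is the verification of the homotopy calculus of right fractions, and specifically its higher-coherence content: one must show not merely that cospans can be completed, but that the categories of completions — and, more generally, the ``categories of hammock reductions'' that appear in Dwyer--Kan's proof — are contractible. In a CFO this is precisely where one genuinely uses path objects and the mapping-path factorization, rather than a strict calculus of fractions. A secondary point requiring care is that $\HHH^{-1}\CCC$ is built from spans whose backward leg is a trivial fibration rather than an arbitrary weak equivalence, so the first-step identification must be carried out with the trivial-fibration version of the calculus, and the comparison in the second step must genuinely invoke (F4); smallness, needed for the nerve constructions and the Dwyer--Kan machinery, is already part of the definition of a CFO.
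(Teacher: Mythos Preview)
The paper does not actually prove this theorem: it is stated there only as ``a restatement of Theorem 2.13 in \cite{RZ20}'' and is used as a black box. So there is no in-paper argument to compare against; what you have written is substantially more than the paper offers, and your overall architecture (reduce to $\LLL(\CCC,\HHH)$ via a calculus of right fractions, then compare $\LLL(\CCC,\HHH)$ with $\LLL(\CCC,\WWW)$) is indeed the standard route taken in the literature, including in \cite{RZ20}.

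That said, your justification of the second map $\LLL(\CCC,\HHH)\to\LLL(\CCC,\WWW)$ has a gap as written. Applying axiom (F4) to a weak equivalence $w$ yields $w=q\circ r$ with $q\in\FFF$ and $r\in\WWW$; 2-out-of-3 then gives $q\in\HHH$, but $r$ is still only in $\WWW$, not in $\HHH$. So ``pushing $\WWW$-arrows into $\HHH$-arrows by elementary hammock moves'' does not terminate on the nose: each application of (F4) trades one $\WWW$-arrow for an $\HHH$-arrow \emph{and another} $\WWW$-arrow. What one actually uses here is the Brown mapping-path factorization available in any CFO: every $f$ factors as a section of a trivial fibration followed by a fibration. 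When $f\in\WWW$ this exhibits $f$, in the hammock picture, as (the formal inverse of an $\HHH$-arrow) composed with an $\HHH$-arrow, which is exactly what is needed to show that $\WWW$ is already inverted in $\LLL(\CCC,\HHH)$ and hence that the comparison is an equivalence. You allude to mapping-path arguments, but you invoke them only for the calculus-of-fractions step; they are equally essential for the $\HHH$-versus-$\WWW$ step, and your sketch should say so explicitly.
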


Then, one can define the ``$n$-stacks'':
\begin{definition}
	Suppose $\CCC$ is a category with covers and finite limits. The simplicial category of $n$-stacks in $\CCC$ is defined by $\HHH^{-1} (n\text{-}\mathrm{Grpds}(\CCC))$.
\end{definition}
Here we need to introduce a few of important examples for futher use. They are compatible with the literature, for example, Section 1.3.4 in Pridham \cite{P2013}. 
\begin{example}$ $ \label{Examples for n Stacks by Hammock Localizations}
	\begin{enumerate}
		\item $\CCC = \mathrm{Et}_X$, the catgeory of schemes \'etale surjective onto $X$, where $X$ is a connected scheme (over a field $k$). In this category, covers are surjective \'etale morphisms (over $X$). Then, by Lemma \ref{n groupoids form a CFO}, $n$-Grpds(FEt$_X$) is a CFO, where trivial fibrations $\HHH$ are the same as hypercovers, which is defined by Lemma \ref{Definition_Hypercovers}. Then, by the definition above, the category of $n$-stacks in $\mathrm{FEt}_X$ is given by 
		\[n\text{-}\mathrm{Stacks}(\mathrm{Et}_X) = \HHH^{-1}(n\text{-}\mathrm{Grpds}(\mathrm{Et}_X)).\]
		For usual stacks, using groupoids, we may define Deligne-Mumford stacks as (equivalence classes of) $1$-groupoid schemes in $\mathrm{Et}_X$, and the morphisms are given by localizing the category of $1$-groupoid schemes with respect $1$-hypercovers (c.f. Wolfson \cite{W16}, Theorem 2.17(1)). Note that, $n$-groupoids \'etale over $X$ implies the degeneracy and face maps are all \'etale, so we may define the (simplicial) category of DM-$n$-stacks \'etale over $X$ by 
		\[\mathrm{DM}\text{-}n\text{-}\mathrm{Stacks}/\mathrm{\acute{e}tale}\text{ } \mathrm{over}\text{ }X = n\text{-}\mathrm{Stacks}(\mathrm{Et}_X).\]
		See Section 1.3.4 in \cite{P2013} for more details.
		\item 
		Similarly, when $\CCC = \pi_1^{\mathrm{\acute{e}t}}(X, \eta)\text{-}\mathrm{FinSets}$, in this category, covers are surjective $\pi_1^{\mathrm{\acute{e}t}}(X, \eta)\text{-}\mathrm{FinSets}$-equivariant maps. Then the category of $1$-stacks in $\pi_1^{\mathrm{\acute{e}t}}(X, \eta)\text{-}\mathrm{FinSets}$ is defined by 
		$$1\text{-}\mathrm{Stacks}(\pi_1^{\mathrm{\acute{e}t}}(X, \eta)\text{-}\mathrm{FinSets}) = \HHH^{-1}(\mathrm{Grpds}(\pi_1^{\mathrm{\acute{e}t}}(X, \eta)\text{-}\mathrm{FinSets})).$$ 
		Here $\HHH$ are still the $1$-hypercovers. We may rewrite
		\[1\text{-}\mathrm{Stacks}(\pi_1^{\mathrm{\acute{e}t}}(X, \eta)\text{-}\mathrm{FinSets})  
		= 
		\pi_1^{\mathrm{\acute{e}t}}(X, \eta)\text{-} 1\text{-}\mathrm{Stacks}(\mathrm{FinSets}).\]
		
		\item In the schematic Galois correspondence like Theorem \ref{SchemeGaloisCorrespondence}, one side of the category equivalence is $\mathrm{FEt}_X$ rather than $\mathrm{Et}_X$, so we are also curious about the situation for $\CCC = \mathrm{FEt}_X$. By the schematic Galois correspondence, since the covers in the category $\pi_1^{\mathrm{\acute{e}t}}(X, \eta)\text{-}\mathrm{FinSets}$ are surjective $\pi_1^{\mathrm{\acute{e}t}}(X, \eta)\text{-}\mathrm{FinSets}$-equivariant maps, the corresponding morphisms in $\CCC = \mathrm{FEt}_X$ are (surjective) finite \'etale covers (see Theorem \ref{SchemeGaloisCorrespondence}). So in $\CCC = \mathrm{FEt}_X$, covers that are all (surjective) finite \'etale covers. The collection of hypercovers $\HHH$ is defined in Lemma \ref{Definition_Hypercovers}.
		Then the category of $n$-stacks in $\mathrm{FEt}_X$ is given by 
		\[n\text{-}\mathrm{Stacks}(\mathrm{FEt}_X)  = \HHH^{-1} (\mathrm{FEt}_X).\]
	\end{enumerate}
\end{example}
\begin{remark}[Remark of (iii) above]\label{Remark_Subtlety obstruct the category correspondence}
	Note that this is subtly different from a full subcategory of $\mathrm{DM}\text{-}n\text{-}\mathrm{Stacks}/\mathrm{\acute{e}tale}\text{ } \mathrm{over}\text{ }X$, since for \'etale surjective morphisms between two finite \'etale schemes $Y_1$ and $Y_2$ over a connected scheme $X$, it is only guaranteed to be quasi-finite instead of finite. However, the $1$-morphisms in $\Hom(X, Y)$ (in $n$-Stacks(Et$_X$)) may be given by $X\xleftarrow[\sim]{f} Z\xrightarrow{g} Y$ where $f$ can be just \'etale but not finite ( i.e., not proper). So we do not simply say $n\text{-}\mathrm{Stacks}(\mathrm{FEt}_X)$ is equal to the full subcategory of DM-$n$-stacks finite \'etale over $X$ in $n$-Stacks(Et$_X$). This is why our next result is not an equivalence of two $\infty$-categories.
\end{remark}
We also give a brief remark on different interpretations of usual DM-stacks below.
\begin{remark}[Equivalence of Different Interpretations of DM-Stacks]\label{remark equivalence of different interpretations of DM-stacks}
	One can actually verify that, if one takes $n = 1$, the above DM-$1$-stacks are compatible with the DM-stacks defined as fibered categories. Let's take $\CCC = \mathrm{Et}(S)$, the category of schemes over a (nice) scheme $S$; take the covers to be surjective \'etale covers; and take $n = 1$. Use the terminology in \cite{LMB18}, objects in DM-$1$-Stacks/\'etale over $S$ in this notes are \textit{groupoids in $S$-schemes}.
	\begin{itemize}
		\item  Suppose $\mathcal{X}$ is a DM-stack \'etale over $S$ in the sense of Definition 4.1 in \cite{LMB18}, by definition, one can obtain a presentation, a surjective \'etale morphism $p: X_0\to \mathcal{X}$. (Here $X_0$ can be taken as a scheme.) Then one can immediately obtain a groupoid in $S$-schemes
		\[(X_1:= X_0\times_{p, \mathcal{X}, p} X_0)\rightrightarrows X_0.\]
		
		\item On the other hand, given a groupoid in schemes \'etale over $S$ \begin{tikzcd}
			X_1 \arrow[r, "d_0", shift left]
			\arrow[r, "d_1"', shift right]
			& X_0
		\end{tikzcd} with the 3 conditions in Proposition 4.3.1 in \cite{LMB18}, we can define an associated prestack $[X_1\rightrightarrows X_0]^{\mathrm{pre}}$, for example, we can let:
		\begin{itemize}
			\item Objects of $[X_1\rightrightarrows X_0]^{\mathrm{pre}}$ over $V\in \mathrm{Sch}/_S$ are given by $X_0(V)$;
			\item Given $g'\in X_0(V')$ and $g\in X_0(V)$, a morphism $f: g\to g'$ is a morphism $f: V'\to V$ such that $gf = g'$, and such that there exists a morphism $\gamma\in X_1(V')$ with $d_0\circ \gamma = g'$ and $d_1\circ \gamma = g$.
		\end{itemize}
		Then the stack associated to the groupoid $[X_1\rightrightarrows X_0]$ is given by the stackification of $[X_1\rightrightarrows X_0]^{\mathrm{pre}}$, see Lemma 3.2 in \cite{LMB18}. 
	\end{itemize}
	By Proposition 4.3.1 in \cite{LMB18}, $X_0\to [X_1\rightrightarrows X_0]$ gives a presentation. 
	By Proposition 3.8 in \cite{LMB18}, $\mathcal{X}$ with presentation $p: X_0\to \mathcal{X}$ has an isomorphism $\mathcal{X}\cong [X_1\rightrightarrows X_0]$. Thus, by the above, the category of DM-stacks \'etale over $S$ (using the definition of fibered categories) is equivalent to the category of DM-$1$-stacks in Et$(S)$ whose covers are surjective \'etale  covers.
\end{remark}

Then, Corollary \ref{corollary of main thm}, restated as below, can be considered as a immediate corollary if we apply the simplicial localizations on $F_\eta$ with respect to hypercovers: 
\begin{corollary}[Galois Correspondence for $n$-stacks]\label{Corollary_Main Corollary Proof}
	There is an induced essentially surjective functor of simplicial categories
	\[G^{\LLL}_\eta: 
	\pi^{\mathrm{\acute{e}t}}_1(X, \eta)\text{-}n\text{-}\mathrm{Stacks(FinSets)}\to\mathrm{DM}\text{-}n\text{-}\mathrm{Stacks}/\mathrm{\acute{e}tale}\text{ } \mathrm{over}\text{ }X  ,\]
	where $\pi^{\mathrm{\acute{e}t}}_1(X, \eta)\text{-}n\text{-}\mathrm{Stacks(FinSets)}$ is defined in (ii) of Example \ref{Examples for n Stacks by Hammock Localizations}, and $\mathrm{DM}\text{-}n\text{-}\mathrm{Stacks}/\mathrm{\acute{e}tale}\text{ } \mathrm{over}\text{ }X$ is the same as the localization $\mathrm{Stacks(}\mathrm{Et}_X\mathrm{)}$ that we defined in Example \ref{Examples for n Stacks by Hammock Localizations}.
\end{corollary}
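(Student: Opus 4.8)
The plan is to build $G^{\LLL}_\eta$ by simplicially localizing the equivalence $F_\eta$ of Theorem~\ref{Galois Correspodence n Groupoids} and then post-composing with the functor induced by the inclusion of sites $\mathrm{FEt}_X\hookrightarrow\mathrm{Et}_X$; after that the entire content is in checking essential surjectivity.

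\emph{Step 1: localize the Galois equivalence.} By its construction $F_\eta$ is assembled from $n\text{-}\mathrm{Grpds}(F^{\mathrm{Sch}}_\eta)$ (Lemma~\ref{Equivalence of Cats implies Equivalence of n Grpds}) and the reorganization of Lemma~\ref{Pull Out Simplicial group action out of n-Gprds(-)}, both of which are exact equivalences of the CFOs of Lemma~\ref{n groupoids form a CFO}; in particular $F_\eta$ preserves weak equivalences and the subcategory $\HHH$ of trivial fibrations, which are exactly the hypercovers. Applying the functorial construction $\HHH^{-1}(-)$ (equivalently, hammock localization at $\WWW$ together with Theorem~\ref{Equivalence of Two Localizations}) yields an equivalence of simplicial categories
\[\HHH^{-1}(F_\eta):\ n\text{-}\mathrm{Stacks}(\mathrm{FEt}_X)\ \xrightarrow{\ \sim\ }\ \HHH^{-1}\!\big(\pi_1^{\mathrm{\acute{e}t}}(X,\eta)\text{-}n\text{-}\mathrm{Grpds}(\mathrm{FinSets})\big),\]
and the right-hand side is $\pi^{\mathrm{\acute{e}t}}_1(X,\eta)\text{-}n\text{-}\mathrm{Stacks}(\mathrm{FinSets})$ as in Example~\ref{Examples for n Stacks by Hammock Localizations}(ii) (using Lemma~\ref{Pull Out Simplicial group action out of n-Gprds(-)} once more, levelwise). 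Fix a quasi-inverse $\Phi$ of $\HHH^{-1}(F_\eta)$.

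\emph{Step 2: change of site.} The inclusion $\iota:\mathrm{FEt}_X\hookrightarrow\mathrm{Et}_X$ is a morphism of categories with covers and finite limits: a surjective finite \'etale map is surjective \'etale, the terminal object $X$ is common, and fibre products of finite \'etale $X$-schemes are again finite \'etale and are computed as in $\mathrm{Et}_X$. Hence $\iota$ induces an exact functor $n\text{-}\mathrm{Grpds}(\mathrm{FEt}_X)\to n\text{-}\mathrm{Grpds}(\mathrm{Et}_X)$ of CFOs carrying hypercovers to hypercovers, and localizing gives a simplicial functor
\[\iota_*:\ n\text{-}\mathrm{Stacks}(\mathrm{FEt}_X)\ \longrightarrow\ \HHH^{-1}\!\big(n\text{-}\mathrm{Grpds}(\mathrm{Et}_X)\big)\ =\ \mathrm{DM}\text{-}n\text{-}\mathrm{Stacks}/\mathrm{\acute{e}tale}\ \mathrm{over}\ X.\]
I then set $G^{\LLL}_\eta:=\iota_*\circ\Phi$; the directions compose correctly, and $G^{\LLL}_\eta$ is a simplicial functor with the source and target claimed in the statement.

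\emph{Step 3: essential surjectivity (the main obstacle).} Since $\Phi$ is an equivalence, it is enough to prove $\iota_*$ is essentially surjective. On objects $\iota_*$ is the inclusion of $n$-groupoids finite \'etale over $X$ into all $n$-groupoids in $\mathrm{Et}_X$, so one must show that every $n$-groupoid $Y_\bullet$ in $\mathrm{Et}_X$ is connected, by a zigzag of hypercovers in $\mathrm{Et}_X$, to an $n$-groupoid all of whose terms lie in $\mathrm{FEt}_X$; i.e. that every object of the target admits a finite-\'etale presentation \emph{up to hypercover}. The approach I would take is to refine a surjective-\'etale atlas $Y_0$ of $Y_\bullet$ by a surjective-\'etale cover that is finite over $X$ and pass to the associated \v{C}ech-type $n$-groupoid, using the hypercover-invariance encoded in the weak equivalences (Lemma~\ref{Lem3-25onBG}), and to arrange these refinements compatibly along the skeletal filtration of $Y_\bullet$ so that they assemble into a simplicial object. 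I expect this to be the genuinely hard and non-formal step — it is where the connectedness of $X$ and any finiteness/niceness assumptions implicit in $\mathrm{Et}_X$ must enter — and it cannot be deduced from Step~1: as Remark~\ref{Remark_Subtlety obstruct the category correspondence} records, $\iota_*$ is \emph{not} an equivalence (it fails fullness and faithfulness, because $\mathrm{FEt}_X$ carries strictly fewer hypercovers than $\mathrm{Et}_X$), so essential surjectivity has to be produced by hand rather than inherited.
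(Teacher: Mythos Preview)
Your Steps 1 and 2 are exactly what the paper does: localize the exact equivalence $F_\eta$ at hypercovers to obtain the equivalence $\GGG_1$, then compose with the simplicial functor induced by the inclusion $\mathrm{FEt}_X\hookrightarrow \mathrm{Et}_X$.

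The gap is in Step 3. You read the statement as asserting that $\iota_*$ is essentially surjective onto \emph{all} of $\mathrm{DM}\text{-}n\text{-}\mathrm{Stacks}/\text{\'etale over }X=\HHH_{\mathrm{Et}}^{-1}(n\text{-}\mathrm{Grpds}(\mathrm{Et}_X))$, and you set out to show that every $n$-groupoid in $\mathrm{Et}_X$ is hypercover-equivalent to one in $\mathrm{FEt}_X$. That is false as stated, and the paper does not attempt it. A simple obstruction: take $Y=\coprod_{i\in\mathbb{N}} X$ viewed as a constant $n$-groupoid; this lies in $n\text{-}\mathrm{Grpds}(\mathrm{Et}_X)$ (\'etale surjective, but not finite) and no zigzag of hypercovers connects it to an object of $n\text{-}\mathrm{Grpds}(\mathrm{FEt}_X)$, since the associated stack has infinitely many connected components. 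Your proposed refinement-of-atlas argument cannot repair this; there is no finite \'etale cover of $X$ dominating $Y$.

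What the paper actually proves is more modest. It observes that the composite factors through the (non-full) subcategory ``$\mathrm{DM}\text{-}n\text{-}\mathrm{Stacks}/\text{finite \'etale over }X$'' inside $\HHH_{\mathrm{Et}}^{-1}(n\text{-}\mathrm{Grpds}(\mathrm{Et}_X))$---the subcategory with the same objects as $n\text{-}\mathrm{Stacks}(\mathrm{FEt}_X)$ but sitting inside the larger localization---and that the map $\GGG_2$ onto \emph{this} subcategory is essentially surjective (indeed an equivalence on objects, tautologically). The functor $G^{\LLL}_\eta$ is then the composite $\GGG_2\circ\GGG_1$ followed by the inclusion into $\HHH_{\mathrm{Et}}^{-1}(n\text{-}\mathrm{Grpds}(\mathrm{Et}_X))$; the ``essentially surjective'' in the paper's argument refers to $\GGG_2$, not to the final inclusion. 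So the content of Step 3 in the paper is entirely formal, and your hard ``non-formal step'' is neither needed nor true.
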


\begin{proof} The proof is roughly compositing a few of functors. 
	
	Since $F_\eta$ in the Galois Correspondence restricts to an equivalence of weak equivalences, we may denote $G_\eta$ as the inverse.
	
	By Theorem \ref{Equivalence of Two Localizations} and Theorem \ref{Galois Correspodence n Groupoids}, we get 
	\begin{align*}
		\LLL(\mathrm{Grpds}(\pi_1^{\mathrm{\acute{e}t}}(X, \eta)\text{-}\mathrm{FinSets}), \WWW_S) \xrightarrow{\simeq} \LLL(\mathrm{Grpds}(\mathrm{FEt}_X), \WWW_{\mathrm{FEt}})
	\end{align*}
	where $\WWW_S$ is the collection of weak equivalences in the $n$-groupoid objects in $\pi_1^{\mathrm{\acute{e}t}(X, \eta)}$-FinSets, and $\WWW_{\mathrm{FEt}}$ is the collection of weak equivalences in the $n$-groupoid objects in $\mathrm{FEt}_X$. See (ii) and (iii) in Example \ref{Examples for n Stacks by Hammock Localizations}. 
	
	Note that we can use the simpler model of simplicial localization with hypercovers, then we get 
	\begin{align}\GGG_1: \HHH_{\mathrm{Surj}}^{-1}\mathrm{Grpds}(\pi_1^{\mathrm{\acute{e}t}}(X, \eta)\text{-}\mathrm{FinSets}) \xrightarrow{\simeq}
		\HHH_{\mathrm{FEt}}^{-1}\mathrm{Grpds}(\mathrm{FEt}_X),
	\end{align}
	where $\HHH_{\mathrm{Surj}}$ is the hypercovers given by covers being surjective $\pi_1^{\mathrm{\acute{e}t}}(X, \eta)$-equivariant surjective maps, see (ii) in Example \ref{Examples for n Stacks by Hammock Localizations}; and where $\HHH_{\mathrm{FEt}}$ is the hypercovers given by covers being (surjective) \'etale finite morphisms, see (iii) in Example \ref{Examples for n Stacks by Hammock Localizations}.
	
	We have a canonical functor $\HHH_{\mathrm{FEt}}^{-1}\mathrm{Grpds}(\mathrm{FEt}_X)\to \HHH_{\mathrm{Et}}^{-1}\mathrm{Grpds}(\mathrm{Et}_X)$, by sending $n$-groupoid objects in $\mathrm{FEt}_X$ to itself in $\mathrm{Grpds}(\mathrm{Et}_X)$. Besides, ``$\mathrm{DM}\text{-}n\text{-}\mathrm{Stacks}/\mathrm{finite}\text{ }\mathrm{\acute{e}tale}\text{ } \mathrm{over}\text{ }X$'' can be considered as a subcategory of ``DM-$n$-stacks \'etale over $X$'' whose objects are all $n$-stacks in $\mathrm{FEt}_X$ with less hypercovers. So we may further refine the functor above and get an essentially surjective functor
	to ``$\mathrm{DM}\text{-}n\text{-}\mathrm{Stacks}/\mathrm{finite}\text{ }\mathrm{\acute{e}tale}\text{ } \mathrm{over}\text{ }X $'' as below: \begin{align}\HHH_{\mathrm{FEt}}^{-1}\mathrm{Grpds}(\mathrm{FEt}_X)&\xrightarrow{\GGG_2, \:\simeq}\mathrm{DM}\text{-}n\text{-}\mathrm{Stacks}/\mathrm{finite}\text{ }\mathrm{\acute{e}tale}\text{ } \mathrm{over}\text{ }X\\
		&\nonumber\hookrightarrow \mathrm{DM}\text{-}n\text{-}\mathrm{Stacks}(\mathrm{Et}_X) \xrightarrow{\simeq} \HHH_{\mathrm{Et}}^{-1}\mathrm{Grpds}(\mathrm{Et}_X).
	\end{align}
	By Example \ref{Examples for n Stacks by Hammock Localizations}, we are done.
	
\end{proof}

\section{Topological and Smooth Analogues}

The classical Galois correspondence has a topological version as below, see Corollary 2.3.9 in \cite{S09}: 
\begin{theorem}\label{Topological Classical Galois Correspondence}
	Let $X$ be a connected, locally connected and semi-locally simply connected topological space, and let $x\in X$ be a point. Let \normalfont{FCov}$(X)$ \textit{be the category of finite (usual) covers of $X$, and, let} $\pi_1(X, x)$-\normalfont{FinSets}$ $ \textit{be the category of finite $\pi_1(X, x)$-sets. 
		Then the following functor, called the fiber functor,}
	\[F_x: \mathrm{FCov}(X)\to \pi_1(X, x)\text{-}\mathrm{FinSets},\;\; (Y\xrightarrow{f} X) \mapsto f^{-1}(x), \]
	\textit{is an equivalence of categories.}
\end{theorem}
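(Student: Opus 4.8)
The plan is to run the classical covering-space argument, in the form found in Szamuely's book. First I would make the functor $F_x$ precise. Given a finite cover $f\colon Y\to X$, path-lifting assigns to each loop $\gamma$ based at $x$ a permutation of the (discrete, finite) fibre $f^{-1}(x)$ depending only on the homotopy class of $\gamma$; this is the monodromy action of $\pi_1(X,x)$, and it is functorial because a morphism of covers commutes with path lifting. Since $X$ is connected, all fibres have the same finite cardinality, so $F_x$ indeed lands in finite $\pi_1(X,x)$-sets. The hypotheses that $X$ be locally connected and semi-locally simply connected are exactly what is needed to guarantee that $X$ admits a universal cover $p\colon\widetilde X\to X$ on which $\pi_1(X,x)$ acts by deck transformations with $\widetilde X/\pi_1(X,x)\cong X$; I would invoke this existence theorem (or reprove it by the standard construction of $\widetilde X$ as homotopy classes of paths from $x$, topologized using a basis of evenly-covered, $\pi_1$-trivial opens).

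Next I would build a candidate quasi-inverse $\Phi$ sending a finite $\pi_1(X,x)$-set $S$ to the balanced product $\widetilde X\times_{\pi_1(X,x)}S\to X$, with the obvious effect on equivariant maps; as $S$ is finite and $\widetilde X\to X$ is a cover, $\Phi(S)$ is again a finite cover. Then I would exhibit the two natural isomorphisms $F_x\circ\Phi\simeq\mathrm{id}$ and $\Phi\circ F_x\simeq\mathrm{id}$: the first identifies the fibre of $\widetilde X\times_{\pi_1(X,x)}S$ over $x$ with $S$ compatibly with monodromy; the second says every cover $Y$ is recovered as $\widetilde X\times_{\pi_1(X,x)}f^{-1}(x)$, which follows by lifting $p$ through $f$ (possible since $\widetilde X$ is simply connected, $X$ connected and locally path-connected) and checking the induced map is an isomorphism of covers. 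Essential surjectivity of $F_x$ then drops out of the first natural isomorphism, and full faithfulness out of having an honest quasi-inverse.

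If instead one prefers to argue without explicitly constructing $\Phi$, I would prove full faithfulness and essential surjectivity directly. Faithfulness: two morphisms $g,g'\colon(Y,f)\to(Y',f')$ agreeing on $f^{-1}(x)$ agree on the whole connected component of each such point, because the locus where two lifts of $f$ coincide is open and closed (using that $Y'\to X$ is a local homeomorphism and $Y$ is locally connected), and every component of $Y$ is itself a connected cover of the connected space $X$ and hence meets $f^{-1}(x)$. Fullness: given a $\pi_1(X,x)$-equivariant map $\varphi\colon f^{-1}(x)\to f'^{-1}(x)$, define $g\colon Y\to Y'$ by choosing for $y\in Y$ over $z$ a path $\gamma$ from $x$ to $z$ lifting (through $f$) to a path from some $y_0\in f^{-1}(x)$ to $y$, and letting $g(y)$ be the endpoint of the lift of $\gamma$ through $\varphi(y_0)$; equivariance of $\varphi$ makes this independent of the choices of $\gamma$ and $y_0$, and local triviality of the two covers makes $g$ continuous.

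The main obstacle is, as usual, the construction of the universal cover (equivalently, of the quasi-inverse $\Phi$): this is where local connectedness and semi-local simple connectedness genuinely enter, and once $\widetilde X$ is available the remaining verifications are bookkeeping. In the direct approach the delicate point migrates to the fullness step, namely checking that the $g$ built by path-lifting is well defined (here one uses equivariance of $\varphi$) and continuous (here one again needs $X$, hence $Y$, locally connected) — so the two formulations meet at exactly the same hypotheses.
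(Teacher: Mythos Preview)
Your argument is correct and is exactly the classical covering-space proof (monodromy via path lifting, quasi-inverse via the balanced product $\widetilde X\times_{\pi_1(X,x)}S$, or alternatively a direct check of full faithfulness and essential surjectivity). There is nothing to compare on the paper's side: the paper does not prove this theorem but simply quotes it as Corollary~2.3.9 of Szamuely's book, which is precisely the reference whose proof you are reconstructing.
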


Now, the category FCov$(X)$ of finite (usual topological) covers of $X$ can be also considered as a category with covers and finite limits. In particular, covers in this category are just the surjective local homeomorphisms. 

The arguments above carry over to the present setting to give the following parallel results.

\begin{theorem}[Galois Correspondence for $n$-groupoids in Topological Spaces]\label{Topological main theorem}
	Let $X$ and $x$ as in Theorem \ref{Topological Classical Galois Correspondence}. There is an exact equivalence of categories of fibrant objects
	\[F_x: n\text{-}\mathrm{Grpds}(\mathrm{FCov }(X))\xrightarrow{\sim} \pi_1(X, x)\text{-}n\text{-}\mathrm{Grpds}(\mathrm{FinSets}).\]
	Here $n\text{-}\mathrm{Grpds}(\mathrm{FCov}(X))$ is the $1$-category of $n$-groupoids in the category $\mathrm{FCov}_X$, the category of finite topological covers over $X$; and $\pi_1(X, x)\text{-}n\text{-}\mathrm{Grpds}(\mathrm{FinSets})$ is the category of $n$-groupoids in finite sets with $\pi_1(X, \eta)$ acting on them. 
\end{theorem}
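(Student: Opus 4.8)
The plan is to transcribe the proof of Theorem~\ref{Galois Correspodence n Groupoids} almost verbatim, replacing the scheme-theoretic fiber functor of Theorem~\ref{SchemeGaloisCorrespondence} by the topological fiber functor $F_x^{\mathrm{Top}}\colon \mathrm{FCov}(X)\to \pi_1(X,x)\text{-}\mathrm{FinSets}$ of Theorem~\ref{Topological Classical Galois Correspondence}. Granting that $\mathrm{FCov}(X)$ is a category with covers and finite limits, with covers the surjective (finite) local homeomorphisms, and that $F_x^{\mathrm{Top}}$ is an equivalence of such categories, the rest is formal. Lemma~\ref{n groupoids form a CFO} makes both $n\text{-}\mathrm{Grpds}(\mathrm{FCov}(X))$ and $n\text{-}\mathrm{Grpds}(\pi_1(X,x)\text{-}\mathrm{FinSets})$ into CFOs; Lemma~\ref{Equivalence of Cats implies Equivalence of n Grpds} upgrades $F_x^{\mathrm{Top}}$ to an exact equivalence $n\text{-}\mathrm{Grpds}(F_x^{\mathrm{Top}})\colon n\text{-}\mathrm{Grpds}(\mathrm{FCov}(X))\xrightarrow{\sim} n\text{-}\mathrm{Grpds}(\pi_1(X,x)\text{-}\mathrm{FinSets})$; and Lemma~\ref{Pull Out Simplicial group action out of n-Gprds(-)}, applied to the constant simplicial group $\pi_1(X,x)$, identifies the target with $\pi_1(X,x)\text{-}n\text{-}\mathrm{Grpds}(\mathrm{FinSets})$. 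Composing gives the desired $F_x$, which is exact as a composite of exact equivalences.

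So the only genuine work is the two point-set-topological facts. First I would check the axioms $(\mathrm{C0})$–$(\mathrm{C4})$ for $\mathrm{FCov}(X)$: the terminal object is $\mathrm{id}\colon X\to X$, and each structure map $Y\to X$ is a finite surjective local homeomorphism, hence a cover, giving $(\mathrm{C1})$; a finite limit (over $X$) of finite covers of $X$, computed in $\mathrm{Top}$, is again a finite cover of $X$ — here one uses that $X$ is locally connected and semilocally simply connected, so that such a limit is again covering and only finitely many sheets occur — giving $(\mathrm{C0})$; surjective local homeomorphisms are stable under arbitrary base change, giving $(\mathrm{C2})$; if $f$ and $gf$ are surjective local homeomorphisms then so is $g$ (a routine check using surjectivity of $f$), giving $(\mathrm{C3})$; and a surjective local homeomorphism is the coequalizer of its kernel pair, giving $(\mathrm{C4})$. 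Second, I would check that $F_x^{\mathrm{Top}}$ is an equivalence of categories with covers and finite limits: it is an equivalence of categories by Theorem~\ref{Topological Classical Galois Correspondence}, it sends $\mathrm{id}\colon X\to X$ to the one-point $\pi_1(X,x)$-set, it preserves finite limits since $f\mapsto f^{-1}(x)$ commutes with fibre products over $X$, and it preserves covers in both directions: a morphism of finite covers of $X$ is automatically a local homeomorphism, and, $X$ being connected, it is surjective if and only if its restriction to the fibre over $x$ is (because the cardinality of the image in each fibre is locally constant in the base point), i.e. if and only if $F_x^{\mathrm{Top}}$ of it is a surjection of $\pi_1(X,x)$-sets, which is exactly a cover on that side.

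The main obstacle is this verification that $\mathrm{FCov}(X)$ satisfies the axioms of a category with covers and finite limits, above all closure under the relevant finite limits inside $\mathrm{Top}$ and the effective-epimorphism axiom $(\mathrm{C4})$; this is where the hypotheses that $X$ is connected, locally connected and semilocally simply connected enter, playing the role that connectedness and normality of the scheme $X$ play for $\mathrm{FEt}_X$ in the proof of Theorem~\ref{Galois Correspodence n Groupoids}. Once it is established, no new idea is required, and one similarly obtains the topological analogue of Corollary~\ref{Corollary_Main Corollary Proof} by the same simplicial-localization argument.
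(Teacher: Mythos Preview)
Your proposal is correct and follows exactly the approach the paper intends: the paper gives no separate proof here, stating only that ``the arguments above carry over to the present setting,'' i.e.\ that one repeats the proof of Theorem~\ref{Galois Correspodence n Groupoids} with Theorem~\ref{Topological Classical Galois Correspondence} in place of Theorem~\ref{SchemeGaloisCorrespondence}. Your explicit verification of axioms $(\mathrm{C0})$--$(\mathrm{C4})$ for $\mathrm{FCov}(X)$ and of the cover-preservation of $F_x^{\mathrm{Top}}$ supplies more detail than the paper itself provides (one small slip: the scheme-theoretic side only assumes $X$ connected, not normal).
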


Then, use the same $\HHH_{\mathrm{Surj}}$ in the proof of Corollary \ref{Corollary_Main Corollary Proof}, replace $\HHH_{\mathrm{Et}}$ by $\HHH_{\mathrm{Cov}}$ which is all the hypercovers in ``$n$-Grpds(Cov$(X)$)'' where the covers in Cov$(X)$ are all surjective topological finite covers, and also replace $\HHH_{\mathrm{FEt}}$ by $\HHH_{\mathrm{FCov}}$ which is the collection of all hypercovers defined by covers being surjective finite topological covers among covers of $X$, we obtained the following.

\begin{corollary}\label{Topological corollary of main thm}
	Let $X$ and $x$ as in Theorem \ref{Topological Classical Galois Correspondence}. There exists an essentially surjective functor
	\[G^\LLL_x:  \pi_1(X, x)\text{-}n\text{-}\mathrm{Stacks(FinSets)} \rightarrow\mathrm{DM}\text{-}n\text{-}\mathrm{Stacks}/\mathrm{Loc.}\text{ }\mathrm{Homeo.} \text{ }\mathrm{over}\text{ }X
	.\]
	Here, $\mathrm{DM}\text{-}n\text{-}\mathrm{Stacks}/\mathrm{Loc.}\text{ }\mathrm{Homeo.} \text{ }\mathrm{over}\text{ }X$ is given by hammock localization of \'etale $n$-groupoids in $\mathrm{FCov}(X)$ at its hyperpcovers obtained by considering covers in the category of covers of $X$ to be surjective finite topological covers; and $\pi_1(X, \eta)\text{-}n\text{-}\mathrm{Stacks(FinSets)}$ is obtained by hammock localization of $\pi_1(X, \eta)\text{-}n\text{-}\mathrm{Groupoids}$.
\end{corollary}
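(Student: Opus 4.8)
The plan is to transport the proof of Corollary~\ref{Corollary_Main Corollary Proof} essentially verbatim, replacing the \'etale data $\mathrm{Et}_X,\ \mathrm{FEt}_X$ by their topological counterparts $\mathrm{Cov}(X)$ (spaces which are local homeomorphisms over $X$) and $\mathrm{FCov}(X)$ (finite covers of $X$). By the remark preceding Theorem~\ref{Topological main theorem}, both $\mathrm{Cov}(X)$ and $\mathrm{FCov}(X)$ are categories with covers and finite limits, with covers the surjective finite topological covers; hence by Lemma~\ref{n groupoids form a CFO} the categories $n\text{-}\mathrm{Grpds}(\mathrm{Cov}(X))$ and $n\text{-}\mathrm{Grpds}(\mathrm{FCov}(X))$ are CFOs in which (by the remark after that lemma) the trivial fibrations coincide with the hypercovers. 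Write $\HHH_{\mathrm{Surj}}$, $\HHH_{\mathrm{FCov}}$, $\HHH_{\mathrm{Cov}}$ for the three hypercover classes appearing in the statement, exactly as in the proof of Corollary~\ref{Corollary_Main Corollary Proof}.

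First I would invoke Theorem~\ref{Topological main theorem}: the fiber functor $F_x\colon n\text{-}\mathrm{Grpds}(\mathrm{FCov}(X))\xrightarrow{\sim}\pi_1(X,x)\text{-}n\text{-}\mathrm{Grpds}(\mathrm{FinSets})$ is an exact equivalence of CFOs; being exact it preserves weak equivalences and, being an equivalence, it restricts to an equivalence of the subcategories of weak equivalences and matches $\HHH_{\mathrm{FCov}}$ with $\HHH_{\mathrm{Surj}}$. Let $G_x$ be a quasi-inverse. Next I would pass to simplicial localizations, using the simpler model $\HHH^{-1}(-)$ on both sides: applying Theorem~\ref{Equivalence of Two Localizations} (whose hypotheses hold since both sides are CFOs), $F_x$ induces an equivalence of simplicial categories
\[
\GGG_1\colon\ \HHH_{\mathrm{Surj}}^{-1}\mathrm{Grpds}\bigl(\pi_1(X,x)\text{-}\mathrm{FinSets}\bigr)\ \xrightarrow{\ \simeq\ }\ \HHH_{\mathrm{FCov}}^{-1}\mathrm{Grpds}(\mathrm{FCov}(X)).
\]
By the topological analogue of Example~\ref{Examples for n Stacks by Hammock Localizations}(ii) the source is $\pi_1(X,x)\text{-}n\text{-}\mathrm{Stacks(FinSets)}$, and by definition the target is the category of DM-$n$-stacks finite covers over $X$. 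Finally I would postcompose with the canonical functor $\HHH_{\mathrm{FCov}}^{-1}\mathrm{Grpds}(\mathrm{FCov}(X))\to\HHH_{\mathrm{Cov}}^{-1}\mathrm{Grpds}(\mathrm{Cov}(X))=\mathrm{DM}\text{-}n\text{-}\mathrm{Stacks}/\mathrm{Loc.}\text{ }\mathrm{Homeo.}\text{ }\mathrm{over}\text{ }X$ obtained by viewing a finite cover as a space \'etale over $X$ and an $\HHH_{\mathrm{FCov}}$-hypercover as an $\HHH_{\mathrm{Cov}}$-hypercover (which is legitimate precisely because every finite cover is a local homeomorphism over $X$, so $\HHH_{\mathrm{FCov}}$ maps into $\HHH_{\mathrm{Cov}}$); this functor is essentially surjective onto its target. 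Setting $G^{\LLL}_x$ to be this composite proves the corollary, mirroring the construction of $G^{\LLL}_\eta$ from $\GGG_1$ and $\GGG_2$.

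The main obstacle — and the reason the statement asserts only essential surjectivity, not an equivalence of $\infty$-categories — is the topological counterpart of Remark~\ref{Remark_Subtlety obstruct the category correspondence}: although a surjective morphism over $X$ between two finite covers of $X$ is again a finite cover, a hypercover resolution of an $n$-groupoid in $\mathrm{FCov}(X)$ taken \emph{inside} $\mathrm{Grpds}(\mathrm{Cov}(X))$ may have terms that are merely local homeomorphisms over $X$ (for instance open, non-proper inclusions) rather than finite covers. Hence the last functor inverts strictly more maps than are already inverted in $\HHH_{\mathrm{FCov}}^{-1}\mathrm{Grpds}(\mathrm{FCov}(X))$, so it fails to be full and the composite $G^{\LLL}_x$ cannot be an equivalence. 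The only genuinely new points to verify beyond the \'etale case are (i) that the structural inputs — the CFO axioms, the coincidence of trivial fibrations with hypercovers, and the homotopy calculus of right fractions underlying Theorem~\ref{Equivalence of Two Localizations} — transfer to $\mathrm{FCov}(X)$ and $\mathrm{Cov}(X)$, which is routine since these are categories with covers and finite limits with surjective finite topological covers as covers, so Lemma~\ref{n groupoids form a CFO} and Theorem~\ref{Equivalence of Two Localizations} apply unchanged; and (ii) that $G^{\LLL}_x$ is indeed essentially surjective onto the stated target, i.e.\ every such DM-$n$-stack admits a presentation by an $n$-groupoid valued in finite covers of $X$, which follows formally from $\GGG_1$ being an equivalence together with the definition of the target via the hypercover class $\HHH_{\mathrm{Cov}}$.
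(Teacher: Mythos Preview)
Your proposal is correct and follows essentially the same approach as the paper: the paper does not write out a separate proof for this corollary but instead indicates (in the paragraph immediately preceding the statement) that one should transport the proof of Corollary~\ref{Corollary_Main Corollary Proof} verbatim, replacing $\HHH_{\mathrm{Et}}$, $\HHH_{\mathrm{FEt}}$ by $\HHH_{\mathrm{Cov}}$, $\HHH_{\mathrm{FCov}}$ and invoking Theorem~\ref{Topological main theorem} in place of Theorem~\ref{Galois Correspodence n Groupoids}. Your write-up does exactly this, and your discussion of the obstruction to fullness is precisely the topological analogue of Remark~\ref{Remark_Subtlety obstruct the category correspondence}.
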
 

We can also tell a parallel story in the realm of (connected) smooth manifolds and its finite smooth covers. Specifically, we take $\mathrm{FDiff}(X)$ to be the category of finite smooth covers of a smooth manifold $X$, and covers in $\mathrm{FDiff}(X)$ to be the surjective local diffeomorphisms. Then we also have the following Galois correspondence for finite smooth covers.
\begin{theorem}\label{Smooth Classical Galois Correspondence}
	Suppose $X$ is a connected smooth manifold, and $x\in X$ is a fixed point. Then the following functor, called the fiber functor,
	\[F_{x}: \mathrm{FDiff}(X)\to \pi_1(X, x)\text{-}\mathrm{FinSets}, \:\:\:\: (Y\xrightarrow{f} X)\mapsto f^{-1}(x),\]
	is an equivalence of categories.
\end{theorem}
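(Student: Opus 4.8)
The plan is to mirror, essentially verbatim, the argument used for Theorem~\ref{SchemeGaloisCorrespondence} and its consequences in the \'etale setting, with the role of $\mathrm{FEt}_X$ played by $\mathrm{FDiff}(X)$ and $\pi_1^{\mathrm{\acute{e}t}}(X,\eta)$ replaced by the topological fundamental group $\pi_1(X,x)$. First I would observe that a connected smooth manifold is, in particular, a connected, locally connected, and semi-locally simply connected topological space, so Theorem~\ref{Topological Classical Galois Correspondence} already provides an equivalence of categories $\mathrm{FCov}(X)\xrightarrow{\sim}\pi_1(X,x)\text{-}\mathrm{FinSets}$ via $Y\mapsto f^{-1}(x)$. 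The only thing to add is that, for smooth $X$, finite smooth covers and finite topological covers carry the same information: every finite topological cover $f\colon Y\to X$ of a smooth manifold admits a unique smooth structure making $f$ a local diffeomorphism (pull back charts along the local homeomorphism $f$), and every continuous map over $X$ between such smooth covers is automatically smooth. This gives an equivalence of categories $\mathrm{FDiff}(X)\xrightarrow{\sim}\mathrm{FCov}(X)$ (in fact it is fully faithful and essentially surjective), and composing with the topological fiber functor yields that $F_x\colon \mathrm{FDiff}(X)\to \pi_1(X,x)\text{-}\mathrm{FinSets}$ is an equivalence.

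The key steps, in order, are: (1) record that $\mathrm{FDiff}(X)$ is a category with covers and finite limits, with covers the surjective local diffeomorphisms --- finite limits (pullbacks) exist because a pullback of a finite smooth cover along any smooth map is again a finite smooth cover, the terminal object is $X$ itself, and the axioms $(\mathrm{C0})$--$(\mathrm{C4})$ are checked exactly as in the topological case since local diffeomorphisms are local homeomorphisms; (2) establish the smooth/topological comparison equivalence $\mathrm{FDiff}(X)\simeq\mathrm{FCov}(X)$ described above, which is the one genuinely smooth-flavored input; (3) invoke Theorem~\ref{Topological Classical Galois Correspondence} and compose. For the functor to be well-defined on objects of $\mathrm{FinSets}$ one notes the fiber $f^{-1}(x)$ of a finite cover is a finite set with a $\pi_1(X,x)$-action by monodromy, exactly as before.

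The main obstacle --- really the only point requiring care --- is step (2): verifying that the continuous-to-smooth promotion is canonical and functorial, i.e. that the smooth structure on a finite topological cover is unique, that a continuous section/morphism over $X$ of smooth covers is smooth, and that this is compatible with composition and identities so that it genuinely defines an equivalence of categories rather than just a bijection on isomorphism classes. This is standard covering-space theory (the local triviality of $f$ transports the smooth atlas of $X$ upstairs uniquely), but it should be stated cleanly since it is what separates this theorem from a mechanical restatement of the topological one. Once step~(2) is in hand, the statement follows formally, and moreover one can immediately run the analogues of Theorem~\ref{Topological main theorem} and Corollary~\ref{Topological corollary of main thm} in the smooth setting: Lemma~\ref{Equivalence of Cats implies Equivalence of n Grpds} upgrades the equivalence $\mathrm{FDiff}(X)\simeq\pi_1(X,x)\text{-}\mathrm{FinSets}$ to an exact equivalence on $n$-groupoids, and then the same hammock-localization argument as in the proof of Corollary~\ref{Corollary_Main Corollary Proof}, now with $\HHH_{\mathrm{FDiff}}$ and $\HHH_{\mathrm{Diff}}$ the relevant hypercover classes, produces an essentially surjective Galois functor to DM-$n$-stacks that are local diffeomorphisms over $X$.
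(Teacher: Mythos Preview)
Your argument is correct. Note, however, that the paper does not actually supply a proof of this theorem: it is stated as a classical fact, parallel to Theorem~\ref{Topological Classical Galois Correspondence} (which is cited from \cite{S09}), and the paper simply asserts it before moving on to the $n$-groupoid and $n$-stack analogues with ``As before, we also have the following analogous result as above.'' Your reduction to the topological Galois correspondence via the equivalence $\mathrm{FDiff}(X)\simeq\mathrm{FCov}(X)$---using that every finite topological cover of a smooth manifold carries a unique compatible smooth structure and that continuous maps over $X$ between such covers are automatically smooth---is the standard way to justify the statement, and is more than the paper provides. Your additional remarks about step~(1) (checking $\mathrm{FDiff}(X)$ is a category with covers and finite limits) and about deducing the $n$-groupoid and $n$-stack versions downstream are also in line with how the paper proceeds, though again the paper handles those by asserting the arguments ``carry over.''
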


As before, we also have the following analogous result as above.
\begin{theorem}[Galois Correspondence for $n$-Groupoids in Smooth Manifolds]
	\label{smooth main theorem}
	Let $X$ and $x$ as in Theorem \ref{Smooth Classical Galois Correspondence}. There is an exact equivalence of categories of fibrant objects
	\[F_x: n\text{-}\mathrm{Grpds}(\mathrm{FDiff }(X))\xrightarrow{\sim} \pi_1(X, x)\text{-}n\text{-}\mathrm{Grpds}(\mathrm{FinSets}).\]
	Here $n\text{-}\mathrm{Grpds}(\mathrm{FDiff}(X))$ is the $1$-category of $n$-groupoids in the category $\mathrm{FDiff}_X$, the category of finite smooth covers over $X$; and $\pi_1(X, x)\text{-}n\text{-}\mathrm{Grpds}(\mathrm{FinSets})$ is the category of $n$-groupoids in finite sets with $\pi_1(X, \eta)$ acting on them. 
\end{theorem}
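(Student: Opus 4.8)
The plan is to run the proof of Theorem \ref{Galois Correspodence n Groupoids} — and of its topological counterpart Theorem \ref{Topological main theorem} — essentially verbatim, feeding in the classical smooth Galois correspondence Theorem \ref{Smooth Classical Galois Correspondence} in place of the schematic (or topological) one. Concretely, one first upgrades $\mathrm{FDiff}(X)$ to a category with covers and finite limits, then observes that the fibre functor $F_x\colon \mathrm{FDiff}(X)\to \pi_1(X,x)\text{-}\mathrm{FinSets}$ is an equivalence of such categories, then applies Lemma \ref{Equivalence of Cats implies Equivalence of n Grpds} to pass to $n$-groupoids, and finally applies Lemma \ref{Pull Out Simplicial group action out of n-Gprds(-)} to move the $\pi_1(X,x)$-action outside the $n\text{-}\mathrm{Grpds}(-)$ construction.

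In detail: (i) Declare the covers in $\mathrm{FDiff}(X)$ to be the surjective local diffeomorphisms over $X$; the terminal object is $X\xrightarrow{\mathrm{id}}X$, and every finite cover $Y\to X$ is a cover, giving (C1). For (C0) and (C2), if $Y_1\to X$ and $Y_2\to X$ are finite smooth covers then the set-theoretic fibre product $Y_1\times_X Y_2$ carries a canonical smooth structure for which $Y_1\times_X Y_2\to X$ is again a finite cover (over a chart of $X$ simultaneously trivializing both it is a finite disjoint union of copies of that chart), so pullbacks of covers exist and are covers; similarly equalizers of a parallel pair over $X$ are the locus where the two maps agree, which is open (two local diffeomorphisms agreeing at a point agree near it) and closed, hence a union of connected components and again a finite cover of $X$, so all finite limits exist. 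Axiom (C3) is the standard cancellation property for surjective local diffeomorphisms, and (C4) holds because a surjective local diffeomorphism between finite covers of $X$ is proper, hence a finite covering map, and finite covering maps are effective epimorphisms (this is checked over trivializing neighbourhoods in the target exactly as for $\mathrm{FCov}(X)$ and $\mathrm{FEt}_X$). (ii) The functor $F_x$ of Theorem \ref{Smooth Classical Galois Correspondence} is already an equivalence of categories, and a morphism of finite smooth covers of the connected manifold $X$ is surjective if and only if it is surjective on the fibre over $x$ — its image is open and closed and the number of sheets is locally constant — which is exactly the condition that $F_x$ applied to it be an epimorphism of $\pi_1(X,x)$-sets; hence $F_x$ preserves and reflects covers, so it is an equivalence of categories with covers and finite limits. (iii) By Lemma \ref{Equivalence of Cats implies Equivalence of n Grpds}, $F_x$ induces an exact equivalence $n\text{-}\mathrm{Grpds}(\mathrm{FDiff}(X))\xrightarrow{\sim} n\text{-}\mathrm{Grpds}(\pi_1(X,x)\text{-}\mathrm{FinSets})$. (iv) Composing with the equivalence $n\text{-}\mathrm{Grpds}(\pi_1(X,x)\text{-}\mathrm{FinSets})\xrightarrow{\sim}\pi_1(X,x)\text{-}n\text{-}\mathrm{Grpds}(\mathrm{FinSets})$ of Lemma \ref{Pull Out Simplicial group action out of n-Gprds(-)} yields the claimed functor $F_x$; both sides are CFOs by Lemma \ref{n groupoids form a CFO}, and a composite of exact equivalences is an exact equivalence, so $F_x$ is an exact equivalence of categories of fibrant objects.

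The only step demanding genuine, if routine, attention is (i): verifying that $\mathrm{FDiff}(X)$ satisfies (C0)--(C4), and in particular that finite limits formed in smooth manifolds stay within finite smooth covers of $X$ and that covers are effective epimorphisms there. Everything downstream is then formal. This is precisely the analogue of the point one must clear for $\mathrm{FEt}_X$ in Theorem \ref{Galois Correspodence n Groupoids} and for $\mathrm{FCov}(X)$ in Theorem \ref{Topological main theorem}, and it is handled by the standard facts about finite covering spaces in the smooth category; no ideas beyond the étale and topological cases are required.
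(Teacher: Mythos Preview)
Your proposal is correct and follows exactly the paper's approach: the paper gives no separate proof for the smooth case, merely writing ``As before, we also have the following analogous result'' after stating that ``the arguments above carry over'' for the topological analogue, and your steps (ii)--(iv) reproduce the proof of Theorem~\ref{Galois Correspodence n Groupoids} verbatim with $F_x^{\mathrm{Sch}}$ replaced by the smooth fibre functor of Theorem~\ref{Smooth Classical Galois Correspondence}. Your step (i), verifying (C0)--(C4) for $\mathrm{FDiff}(X)$, is more explicit than anything in the paper (which leaves the corresponding checks for $\mathrm{FEt}_X$ and $\mathrm{FCov}(X)$ implicit as well), but it is the right thing to spell out and your sketch of it is sound.
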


Then, as we did in Example \ref{Examples for n Stacks by Hammock Localizations} for $\mathrm{Et}_X$, by Lemma \ref{n groupoids form a CFO}, $n$-Grpds($\mathrm{FDiff}(X)$) is a CFO, where trivial fibrations $\HHH$ are the same as hypercovers. Then,
\[n\text{-}\mathrm{Stacks}(\mathrm{FDiff}(X)) = \HHH^{-1}(n\text{-Grpds}(\mathrm{FDiff}(X))).\]
Similar to the definition of $n$-Stacks($\mathrm{Et}_X$) in Example \ref{Examples for n Stacks by Hammock Localizations}, we may define 
\[\mathrm{DM}\text{-}n\text{-}\mathrm{Stacks}/\mathrm{Loc.}\text{ }\mathrm{Diffeo.} \text{ }\mathrm{over}\text{ }X= n\text{-}\mathrm{Stacks}(\mathrm{FDiff}(X)).\]
Suppose we define the category of smooth covers of $X$ to be $\mathrm{Diff}(X)$.
Then, use the same $\HHH_{\mathrm{Surj}}$ in the proof of Corollary \ref{Corollary_Main Corollary Proof}, replace $\HHH_{\mathrm{Et}}$ by $\HHH_{\mathrm{Diff}}$ which is all the hypercovers in ``$n$-Grpds(Diff$(X)$)'' where the covers in Diff$(X)$ are all surjective smooth covers over $X$, and also replace $\HHH_{\mathrm{FEt}}$ by $\HHH_{\mathrm{FDiff}}$ which is the collection of all hypercovers defined by covers being surjective smooth finite covers among covers of $X$, we obtained the following, we can get the following theorem as an analogue of Corollary \ref{corollary of main thm}.

\begin{corollary}\label{Topological corollary of main thm diffeo}
	Let $X$ be a smooth manifold, $x\in X$ is a fixed point. There exists an essentially surjective functor of simplicial categories
	\[G^\LLL_x:  \pi_1(X, x)\text{-}n\text{-}\mathrm{Stacks(FinSets)}\to\mathrm{DM}\text{-}n\text{-}\mathrm{Stacks}/\mathrm{Loc.}\text{ }\mathrm{Diffeo.} \text{ }\mathrm{over}\text{ }X
	.\]
	Here, $\mathrm{DM}\text{-}n\text{-}\mathrm{Stacks}/\mathrm{Loc.}\text{ }\mathrm{Diffeo.} \text{ }\mathrm{over}\text{ }X$ is given by hammock localization of \'etale $n$-groupoids in $\mathrm{FDiff}(X)$ at its hyperpcovers obtained by considering covers in the category of smooth covers of $X$ to be surjective smooth covers; and $\pi_1(X, \eta)\text{-}n\text{-}\mathrm{Stacks(FinSets)}$ is obtained by applying hammock localization of $\pi_1(X, \eta)\text{-}n\text{-}\mathrm{Groupoids}$.
\end{corollary}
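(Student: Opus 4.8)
The plan is to repeat the proof of Corollary \ref{Corollary_Main Corollary Proof} almost verbatim in the smooth category, with Theorem \ref{smooth main theorem} replacing Theorem \ref{Galois Correspodence n Groupoids} and with $\mathrm{FDiff}(X)$, $\mathrm{Diff}(X)$ in place of $\mathrm{FEt}_X$, $\mathrm{Et}_X$. The one genuinely new (but routine) input is that $\mathrm{FDiff}(X)$ and $\mathrm{Diff}(X)$ are categories with covers and finite limits, so that Lemma \ref{n groupoids form a CFO} and Theorem \ref{Equivalence of Two Localizations} become available: $X$ is a terminal object, fiber products of smooth covering maps along local diffeomorphisms are again smooth covering maps (transversality being automatic for submersions), pullbacks of surjective local diffeomorphisms are surjective local diffeomorphisms, and surjective local diffeomorphisms are effective epimorphisms. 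Once this axiom check $(\mathrm{C0})$--$(\mathrm{C4})$ is in place, $n\text{-}\mathrm{Grpds}(\mathrm{FDiff}(X))$ and $n\text{-}\mathrm{Grpds}(\mathrm{Diff}(X))$ are CFOs whose trivial fibrations coincide with their hypercovers (by the remark following Lemma \ref{n groupoids form a CFO} together with Lemma \ref{Lem3-25onBG}); write $\HHH_{\mathrm{FDiff}}$, $\HHH_{\mathrm{Diff}}$ for these classes of hypercovers and, as in the proof of Corollary \ref{Corollary_Main Corollary Proof}, $\HHH_{\mathrm{Surj}}$ for the hypercovers on the $\pi_1(X,x)\text{-}\mathrm{FinSets}$ side.

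First I would invoke Theorem \ref{smooth main theorem}, which produces an exact equivalence of CFOs
\[F_x\colon n\text{-}\mathrm{Grpds}(\mathrm{FDiff}(X))\xrightarrow{\ \sim\ }\pi_1(X,x)\text{-}n\text{-}\mathrm{Grpds}(\mathrm{FinSets}).\]
Because $F_x$ is obtained (via Lemmas \ref{Equivalence of Cats implies Equivalence of n Grpds} and \ref{Pull Out Simplicial group action out of n-Gprds(-)}) from the equivalence of categories with covers $\mathrm{FDiff}(X)\simeq\pi_1(X,x)\text{-}\mathrm{FinSets}$ of Theorem \ref{Smooth Classical Galois Correspondence}, it preserves covers, hence fibrations, trivial fibrations and weak equivalences; in particular it restricts to an isomorphism of the subcategories $\HHH_{\mathrm{FDiff}}$ and $\HHH_{\mathrm{Surj}}$. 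Applying Theorem \ref{Equivalence of Two Localizations} on both sides and using that $F_x$ intertwines the two models of simplicial localization, I obtain an equivalence of simplicial categories
\[\GGG_1\colon \HHH_{\mathrm{Surj}}^{-1}\mathrm{Grpds}(\pi_1(X,x)\text{-}\mathrm{FinSets})\xrightarrow{\ \sim\ }\HHH_{\mathrm{FDiff}}^{-1}\mathrm{Grpds}(\mathrm{FDiff}(X)),\]
whose source is by definition $\pi_1(X,x)\text{-}n\text{-}\mathrm{Stacks}(\mathrm{FinSets})$. Next, mirroring the last paragraph of the proof of Corollary \ref{Corollary_Main Corollary Proof}, I would observe that an $n$-groupoid in $\mathrm{FDiff}(X)$ is in particular an $n$-groupoid in $\mathrm{Diff}(X)$ and that a hypercover in $\HHH_{\mathrm{FDiff}}$ is a fortiori one in $\HHH_{\mathrm{Diff}}$ (a surjective finite smooth cover of $X$ is a surjective smooth cover); hence the inclusion on $n$-groupoids descends to a functor of localizations $\HHH_{\mathrm{FDiff}}^{-1}\mathrm{Grpds}(\mathrm{FDiff}(X))\to\HHH_{\mathrm{Diff}}^{-1}\mathrm{Grpds}(\mathrm{Diff}(X))=\mathrm{DM}\text{-}n\text{-}\mathrm{Stacks}/\mathrm{Loc.}\text{ }\mathrm{Diffeo.} \text{ }\mathrm{over}\text{ }X$. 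Composing this with $\GGG_1$ produces the desired functor $G^{\LLL}_x$.

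The step I expect to be the main obstacle is not constructing $G^{\LLL}_x$ but arguing precisely that it is essentially surjective and, at the same time, explaining why it fails to be an equivalence of simplicial categories — the point already flagged in Remark \ref{Remark_Subtlety obstruct the category correspondence}. Morphisms between two $n$-stacks presented over $\mathrm{FDiff}(X)$, computed in $\HHH_{\mathrm{Diff}}^{-1}\mathrm{Grpds}(\mathrm{Diff}(X))$, are represented by spans whose weak-equivalence leg may pass through an $n$-groupoid in $\mathrm{Diff}(X)$ with structure maps that are local diffeomorphisms but not finite covers — a strictly larger class of spans than those available inside $\mathrm{FDiff}(X)$; conversely, not every $n$-groupoid in $\mathrm{Diff}(X)$ is weakly equivalent to one in $\mathrm{FDiff}(X)$. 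Thus $G^{\LLL}_x$ is only essentially surjective onto a sub-simplicial-category of $\mathrm{DM}\text{-}n\text{-}\mathrm{Stacks}/\mathrm{Loc.}\text{ }\mathrm{Diffeo.} \text{ }\mathrm{over}\text{ }X$, exactly as in the \'etale case. Beyond this subtlety the argument is entirely formal, its only smooth-specific ingredients being the axiom check for $\mathrm{FDiff}(X)$ and $\mathrm{Diff}(X)$ described above together with the smooth fiber-functor equivalence of Theorem \ref{Smooth Classical Galois Correspondence}.
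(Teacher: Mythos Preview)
Your proposal is correct and follows essentially the same approach as the paper: the paper does not give a separate proof of this corollary but simply instructs the reader to rerun the proof of Corollary \ref{Corollary_Main Corollary Proof} with $\HHH_{\mathrm{Surj}}$ unchanged and $\HHH_{\mathrm{Et}}$, $\HHH_{\mathrm{FEt}}$ replaced by $\HHH_{\mathrm{Diff}}$, $\HHH_{\mathrm{FDiff}}$, invoking Theorem \ref{smooth main theorem} in place of Theorem \ref{Galois Correspodence n Groupoids}. Your write-up is in fact more detailed than the paper's, since you make explicit the axiom check $(\mathrm{C0})$--$(\mathrm{C4})$ for $\mathrm{FDiff}(X)$ and $\mathrm{Diff}(X)$ that the paper leaves implicit.
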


%
\nocite{*}
\printbibliography

\end{document}